\definecolor{pgray}{RGB}{75,92,113}
\definecolor{porange}{RGB}{230,148,20}
\definecolor{pblue}{RGB}{104,140,152}
\newcommand{\colorlist}[1]{%
\ifcase#1 % 0
Blue%
\or% 1
ForestGreen%
\or% 2
Fuchsia%
\or% 3
Turquoise%
\or% 4
BurntOrange%
\or% 5
WildStrawberry%
\or% 6
LimeGreen%
\else% anything else
Orchid%
\fi
}
\theoremstyle{plain} % text is in italics
\newtheorem{theorem}{Theorem}[section]
\newtheorem{proposition}[theorem]{Proposition}
\newtheorem{lemma}[theorem]{Lemma}
\newtheorem{Claim}[theorem]{Claim}
\newtheorem{corollary}[theorem]{Corollary}
\theoremstyle{definition} % text is not in italics
\newtheorem{definition}[theorem]{Definition}
\newtheorem{remark}[theorem]{Remark}
\newcounter{suggestcount}
\newcounter{commentlabel}
\newlength{\poincarecommentlift}
\DeclareRobustCommand{\COMMENT}[1]{\@bsphack%
	\stepcounter{commentlabel}%
	\vbox to0pt{%
		\setlength{\fboxrule}{0.75pt}%
		\setlength{\fboxsep}{0.75pt}%
		\setlength{\poincarecommentlift}{1ex}%
		\addtolength{\poincarecommentlift}{\fboxrule}%
		\addtolength{\poincarecommentlift}{\fboxsep}%
		\vss\color{red}%
		\rlap{\rlap{\vrule height\poincarecommentlift width\fboxrule}\raise \poincarecommentlift%
		\hbox{\fcolorbox{red}{yellow}{%
\normalfont\footnotesize\ttfamily\bfseries\thecommentlabel}}}}%
	\marginpar{\noindent\raggedright%
	\textbf{\color{red}\thecommentlabel}:\thinspace\footnotesize#1}%
}
\title{ Brill-Noether loci with ramification at two points.}
\author{montserrat Teixidor i Bigas}
\address{Mathematics Department, Tufts University, 503 Boston Avenue, Medford MA 02155, USA}
\let\oldtocsection=\tocsection
\let\oldtocsubsection=\tocsubsection
\renewcommand{\tocsection}[2]{\hspace{0em}\oldtocsection{#1}{#2}}
\renewcommand{\tocsubsection}[2]{\hspace{1.1em}\oldtocsubsection{#1}{#2}}
\begin{document} 

\maketitle

%\tableofcontents

% These parameters make paragraphs of text look correct.
% These must come AFTER the \tableofcontents.
% These four lines replace the parskip package.
\setlength{\parindent}{0cm} % no indentation
\setlength{\parskip}{\baselineskip} % extra line between paragraphs
\setlength{\abovedisplayskip}{0.7\baselineskip} % fix spacing around displayed math
\setlength{\belowdisplayskip}{0.7\baselineskip}

\begin{abstract} We prove the injectivity of the Petri map for linear series  on a general curve with given ramification at two generic points. 
We also describe the components of such a set of linear series on a chain of elliptic curves.

\end{abstract}

Brill-Noether loci $G_d^r$  parametrize linear series $(L, V)$ where $L$ is a line bundle of degree $d$ and $V$ an $r+1$ dimensional space of sections of $L$ .
The behavior of these Brill-Noether loci is governed by the Petri map defined as the natural cup product $P: V\otimes H^0(C, K\otimes L^{-1})\to H^0(C,K)$. 
Its image is the orthogonal to the tangent space to $G_d^r$ at $(L, V)$. 
It was proved by Gieseker (\cite{G}) that this map is injective  for every linear series on a generic curve.

In order to understand Brill-Noether loci for generic curves,
 it is often useful to introduce  degeneration techniques  that reduce questions about non-singular curves to questions about singular, usually nodal and often reducible curves.
 One of the most well behaved  degenerations are chains of elliptic and rational curves introduced by Welters in \cite{W} and used profusely since then. 
This type of degeneration requires  considering a  generalized Brill-Noether  locus consisting of  linear series with given ramification at two fixed points:
Given a curve $C$ of genus $g$, integers $r, d, \alpha=(\alpha_0,\dots, \alpha_r),\beta=( \beta_0, \dots, \beta_r) $ with   $0\le \alpha_0\le \dots\le \alpha_r,\ \beta_0\ge \dots\ge \beta_r\ge 0$, 
we consider the space $G^r_d(C,P,Q,\alpha, \beta)$ of linear series of degree $d$ and dimension $r$ on $C$ with ramification at least $\alpha$ at $P$ and $\beta$ at $Q$.

There is a corresponding Petri map (see section \ref{sec:tansp} )  whose injectivity for a given line bundle and space of sections is equivalent to the non-singularity of this scheme at the given point.
This map was considered in the case of a single point in \cite{CHT}.
Here we prove the injectivity of the map for  two points for generic curve and exact ramification at the points. This result has also been recently obtained by Pflueger (see \cite{P}).
 From standard techniques, it is sufficient to prove the result for limit linear series on reducible curves.
 The original proof of Gieseker, as well as easier proofs provided later \cite{EH1}, \cite{CT} or tropical proofs (\cite {JP}), all in the case of no ramification points  show that the kernel of the Petri map is zero.
Generalizations to the case of imposing ramification at a single point cited above or considering higher rank vector bundles  (see for instance \cite{T},  \cite{CLT}), as well as  Pflueger work,
  follow the same pattern.
 Here, we use  the methods of \cite{LOTZ1}, \cite{LOTZ2}. Instead of proving that the kernel of the Petri map is zero, we show that the image is large enough.

A study of the loci  $G^r_d(C,P,Q,\alpha, \beta)$ bypassing the Petri map and focusing instead on the structure of the corresponding loci on elliptic curves is provided in \cite{COP}.

We also include in this work the  explicit description of the space of limit linear series on a chain of elliptic curves.% that was promised in \cite{LT}.
We show the correspondence between components of the moduli space of linear series with fixed ramification on a chain of elliptic curves and fillings of a skew shape generalizing Young Tableaux.

The first appearance of Young Tableaux to parameterize linear series on a reducible curve with $g$ elliptic components seems due to  Edidin (see \cite{E}).
He  introduced them in the case when the Brill-Noether number is 0  and therefore  there is only a finite number of linear series on the curve. 
They were used again in the  case $\rho =0$ in the tropical context in \cite{CDPR}.
A generalization was considered in \cite{LT}, for arbitrary $\rho$ and describing $W^r_d$, that is the image of $G^r_d$ in the Jacobian.
In \cite{CLPT}, we built a  description $G^r_d(C,P,Q,\alpha, \beta)$ tailored only to the case of $\rho =1$.

%%%%%%%%%%%%%%%%%%%%%%%%%%%%%%%%%%%%%%%%%%%%%%%%%%%%%%%%%%%%%%%%%%%%%%%%%%%%%%%%%%%%%%%%%%%
\section{Preliminaries on limit linear series.}\label{LLS}

Recall that a connected nodal curve is stable if  every rational component has at least three nodes. 
We will also consider some semistable curves with rational  components with only two nodes.
A  semistable curves is of compact type if its dual graph is a tree or equivalently its jacobian is compact.
In this paper, we will use {\bf generic chains of elliptic and rational curves}: 
given elliptic curves $E_1, \dots E_g$ with generic  marked points $P_i,Q_i\in E_i$, glue $Q_i$ to $P_{i+1}, i=1,\dots, g-1$ and insert  chains of rational curves at nodes and perhaps at $P_1, Q_g$.
We will number the components $C^1,\dots, C^N$ and the points giving rise to the nodes  with $P^i, Q^i\in C^i$.

We start by reviewing the concept of limit linear series introduced by Eisenbud and Harris and its generalizations by Osserman to linked linear series.

\begin{definition}\label{def:lls} (see \cite{EH2})  Let $C$ be a curve of compact type with components $C^i, i=1,\dots, N$.
A {\bf  limit linear series} of degree $d$ and dimension $r$ on a $C$  is given as 
 $(L^i, V^i), i=1,\dots, N$ with $L^i$ a line bundle of degree $d$ on $C^i$ and $V^i$ an $(r+1)$-dimensional space of sections of $L^i$
satisfying the following compatibility condition: let  the node $N$ be obtained by identifying the point $P$ on the component $C^i$  with the point $Q$ on the component $ C^j$
and  $u_0< \dots< u_r,\ v_0> \dots> v_r$ be the orders of vanishing of $V^i, V^j$ at $P, Q $ respectively, then, $u_k+v_k\ge d, k=0,\dots, r$.
The series is said to be {\bf refined }if the inequality is an equality at every node and for every vanishing pair.
 \end{definition}
 
 The concept of limit linear series appears when considering a family of linear series with general fiber non-singular and special fiber a curve of compact type.
 On the central fiber, one can concentrate all the degree of the limit line bundle on one irreducible component  which will then necessarily carry also the limit of the space of sections.
 As we are interested in limit linear series as the limit of a linear series in a non-singular curve, we can assume that the limit linear series are refined. 
If they are not, a few blow ups in the family will take us to this situation.
It is also easy to add assigned vanishing at other given points(away from the nodes). 
In this paper we will consider two additional general points that we will place at the beginning and end of the curve.

 An alternative to limit linear series that concentrates the degree successively on one component
  is dividing the degree of the limit vector bundle (and therefore also its spaces of sections) arbitrarily among the components of the limit curve of compact type.
 On a family with general fiber irreducible and central fiber of compact type, one can go from one multi-degree on the central fiber 
 to a different one by tensoring with a line bundle with support on the central fiber.
 This process gives rise to  natural maps between the different line bundles so obtained that vanishes on some components and induces maps among the spaces of sections. 
 
\begin{definition}\label{def:llinks} (see \cite{O1}, \cite{O2}) Let $C$ be a curve of compact type with components $C^i, i=1,\dots, N$.
A {\bf linked linear series} of degree $d$ and dimension $r$ on a $C$  is given as a line bundle ${\mathcal L}$ on $C$ and for any multi-degree on $C$ of total degree $d$
a space of sections of dimension $r+1$  compatible with the natural maps among the line bundles obtained from ${\mathcal L}$ modifying the degree to the given one.
 \end{definition}
 
From any linked linear series, one can obtain a limit linear series by ignoring the additional data.
Vanishing conditions at the nodes are a consequence of the compatibility conditions.
 One advantage of the linked linear series approach is that it allows to relate sections on different irreducible components to each other and think of them as global sections.

The scheme of refined limit linear series of degree $d$ and dimension $r$ 
is a union over the set of possible vanishing at the nodes of a product of spaces of linear series on the components with assigned vanishing. 

\begin{definition}\label{def:eh-struct}
Let $C$ be a generic chain of  elliptic and rational curves of total genus $g$. Choose $ r, d, \alpha =(\alpha_0,\dots, \alpha_r), \beta =(\beta_0,\dots, \beta_r), $ non-negative integers.
with $\alpha_0\le \dots\le \alpha_r,\ \beta_0\ge \dots\ge \beta_r$.
Consider all possible choices of $\alpha(i), \beta(i)$ such that
\[ \alpha (1)=(\alpha_0,\dots, \alpha_r), \beta (N)=(\beta_0,\dots, \beta_r), 
 \alpha _j(i)+\beta _j(i-1)=d-r, j=0,\dots, r, i=2,\dots, N\]
 still satisfying $\alpha_0(i)\le \dots\le \alpha_r(i),\ \beta_0(i)\ge \dots\ge \beta_r(i), \ i=1,\dots, N$.
 The {\bf Eisenbud-Harris scheme} $G^r_d(C,P_1,Q_N,\alpha, \beta)$ is the scheme obtained as the union
$$\bigcup _{\alpha, \beta}\prod_{i=1}^g G^{r}_d (C^i,P^i,Q^i, \alpha (i),\beta(i)) \subseteq \prod_{i=1}^g G^r_d(C^i)$$
The {\bf  Brill-Noether number} is defined as 
  \[\rho (g, d,r, \alpha, \beta)=g-(r+1)(g-d+r)-\sum_{j=0}^r \alpha_j-\sum_{j=0}^r  \beta_j\]
\end{definition}

This $\rho (g, d,r, \alpha, \beta)$ is the expected dimension of  the Brill-Noether  locus of linear series of degree $d$ and dimension  $r$
with ramification orders at two fixed  points given as $\alpha_0,\dots, \alpha_r, \beta_0,\dots, \beta_r$.
In what follows, we will  also use the vanishing (rather than ramification) orders corresponding to ramifications $\alpha, \beta$ defined as 
 \[ u(i)=(u_0(1),u_1(i), \dots,u_r(i))=(\alpha_0(i), \alpha_1(i)+1,\dots, \alpha_r(i)+r), \]
\[v(i)=(v_0(i),v_1(i), \dots,v_r(g))=(\beta_0(i)+r, \beta_1(i)+r-1,\dots, \beta_r(i))\]

\begin{lemma} \label{lem:van2psc} Let $C$ be a non-singular curve, $P,Q\in C$ arbitrary points, $L$ a line bundle on $C$, $V\subseteq H^0(C,L)$ an $r+1$ dimensional space of sections,
 $u_0<\dots <u_r, \ v_0>\dots >v_r$  the distinct orders of vanishing of the sections in $V$ at the points $P,Q$.
Then, there exist a basis $s_0,\dots , s_r$ and a permutation $\sigma$ of $0,\dots, r$ such that 
$ord _Ps_j=u_j, ord _Qs_j=v_{\sigma (j)}$.
\end{lemma}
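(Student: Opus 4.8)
The statement is, after one translation, a fact about two complete flags on the vector space $V$, so the plan is to encode the vanishing data as flags and then run an induction on $r$. For $0\le k\le r$ set $F_k=\{s\in V:\mathrm{ord}_P s\ge u_k\}$ and $G_k=\{s\in V:\mathrm{ord}_Q s\ge v_k\}$, with the convention $F_{r+1}=0$ and $G_{-1}=0$. Since $u_0<\dots<u_r$ (resp. $v_0>\dots>v_r$) are exactly the vanishing orders occurring on $V$, and each step of the order filtration drops the dimension by at most one (the graded piece injects into a one-dimensional fibre), one gets $\dim F_k=r+1-k$ and $\dim G_k=k+1$; thus $F_\bullet$ is a descending and $G_\bullet$ an ascending complete flag on $V$, and a nonzero $s$ satisfies $\mathrm{ord}_P s=u_j$ iff $s\in F_j\setminus F_{j+1}$, and $\mathrm{ord}_Q s=v_j$ iff $s\in G_j\setminus G_{j-1}$. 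So the lemma amounts to producing a basis $s_0,\dots,s_r$ with $s_j\in F_j\setminus F_{j+1}$ together with a permutation $\sigma$ satisfying $s_j\in G_{\sigma(j)}\setminus G_{\sigma(j)-1}$; this is just the assertion that the relative position of two complete flags is recorded by a permutation (a Bruhat-type statement), which I will prove by induction.

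I would induct on $r$, the case $r=0$ being immediate. Choose $s_r$ spanning the line $F_r$, so $\mathrm{ord}_P s_r=u_r$; let $m$ be the index with $\mathrm{ord}_Q s_r=v_m$ and put $\sigma(r)=m$. Pass to $\overline V=V/\langle s_r\rangle$ with the images $\overline F_\bullet$, $\overline G_\bullet$ of the two flags. Because $s_r$ lies in $F_k$ for every $k$, the subspaces $\overline F_0\supsetneq\dots\supsetneq\overline F_{r-1}\supsetneq 0$ form a complete flag on $\overline V$ realizing the $P$-orders $u_0<\dots<u_{r-1}$; because $s_r\in G_k$ exactly when $k\ge m$, one has $\overline G_{m-1}=\overline G_m$ and the remaining inclusions are strict, so after deleting the repetition the $\overline G_k$ form a complete flag on $\overline V$ realizing the $Q$-orders $\{v_k:k\ne m\}$. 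By the inductive hypothesis there is a basis $\overline s_0,\dots,\overline s_{r-1}$ of $\overline V$ with $\overline s_j\in\overline F_j\setminus\overline F_{j+1}$ and a bijection $\sigma\colon\{0,\dots,r-1\}\to\{0,\dots,r\}\setminus\{m\}$ with $\overline s_j\in\overline G_{\sigma(j)}\setminus\overline G_{\sigma(j)-1}$.

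The one delicate point — and the main obstacle — is the lifting step: one must lift each $\overline s_j$ to a section $s_j\in V$ with $\mathrm{ord}_P s_j=u_j$ and $\mathrm{ord}_Q s_j=v_{\sigma(j)}$ \emph{simultaneously}; granting this, $s_0,\dots,s_r$ is automatically a basis of $V$ and $\sigma$ extended by $\sigma(r)=m$ is the required permutation. The idea is that the a priori different lifts, one taken inside $F_j$ and one inside $G_{\sigma(j)}$, differ only by a scalar multiple of $s_r$, so everything reduces to tracking which flag members contain $s_r$. When $\sigma(j)>m$, both $F_j$ and $G_{\sigma(j)}$ contain $s_r$, hence the image of $F_j\cap G_{\sigma(j)}$ in $\overline V$ equals $\overline F_j\cap\overline G_{\sigma(j)}$ and contains $\overline s_j$, so any lift lying in $F_j\cap G_{\sigma(j)}$ works; when $\sigma(j)<m$, the map $G_{\sigma(j)}\to\overline G_{\sigma(j)}$ is an isomorphism, giving a unique lift $s_j\in G_{\sigma(j)}$, and $s_j\in F_j$ because correcting any lift already in $F_j$ by a multiple of $s_r\in F_j$ keeps it in $F_j$. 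In either case, membership of $\overline s_j$ in $\overline F_j\setminus\overline F_{j+1}$ and in $\overline G_{\sigma(j)}\setminus\overline G_{\sigma(j)-1}$ forces $s_j$ into $F_j\setminus F_{j+1}$ and $G_{\sigma(j)}\setminus G_{\sigma(j)-1}$, again using whether or not $s_r$ lies in the relevant member. Once this bookkeeping is organized the proof closes; the curve itself plays no further role beyond the fact that order of vanishing at a point gives a filtration with one-dimensional graded pieces.
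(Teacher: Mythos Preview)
Your proof is correct, but the route is genuinely different from the paper's.

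The paper's argument is a direct Gaussian elimination: start with any basis $s_0,\dots,s_r$ adapted to the $P$-vanishing (so $\mathrm{ord}_P s_j=u_j$); if the $Q$-orders happen to be distinct, stop. Otherwise pick the smallest $v_k$ occurring with multiplicity, say for $s_{i_1},\dots,s_{i_t}$ with $i_1<\dots<i_t$, and replace each $s_{i_l}$ ($l<t$) by $s_{i_l}+\lambda_{i_l}s_{i_t}$. Because $i_l<i_t$ this preserves $\mathrm{ord}_P s_{i_l}=u_{i_l}$, while a suitable $\lambda_{i_l}$ cancels the leading coefficient at $Q$ and strictly raises $\mathrm{ord}_Q s_{i_l}$. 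Iterate until all $Q$-orders are distinct.

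Your approach instead abstracts the data to two complete flags $F_\bullet,G_\bullet$ on $V$ and proves by induction on $r$ (quotienting by the line $F_r$ and lifting back) that their relative position is encoded by a permutation. This is more conceptual: it identifies the lemma as the Bruhat decomposition for $\mathrm{GL}(V)$, and makes transparent that the curve plays no role beyond supplying two filtrations with one-dimensional graded pieces. The cost is the careful case analysis in the lifting step, which you handle correctly (the key point being that $s_r\in F_j$ for all $j$, so lifts can always be corrected on the $F$-side, while on the $G$-side one distinguishes $\sigma(j)\gtrless m$). The paper's argument is shorter and more hands-on; yours places the result in its natural home and would generalize more readily.
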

\begin{proof}
Choose a basis $s_j$ such that $ord _Ps_j=u_j$. If the orders of vanishing at $Q$ of these sections are all different, then this basis works.
Otherwise, choose $v_k$  with $k$ as large as possible (so $v_k$ as small as possible) such that there are several sections 
 $s_{i_1}\dots s_{i_t} ,\ i_1<\dots < i_t$ that have this order of vanishing $v_k$ at $Q$. 
 Replace  $s_{i_1}\dots s_{i_{t-1}} $ by  $s_{i_1}+\lambda_{i_1}s_{i_t}\dots s_{i_{t-1}}+\lambda_{i_{t-1}}s_{i_t} $,
  we obtain a new basis with the same orders of vanishing at $P$ but higher order of vanishing at $Q$.
  Repeat the process if needed.
\end{proof}

We will say that the linear series has a {\bf swap} on $C $ if $\sigma\not= Id$ in the lemma above.

 \begin{lemma} \label{lem:van2pchain} Let $C$ be a chain of elliptic and rational curves curve, $P^1\in C^1,Q^N\in C^N$.
 Let $(L^i, V^i)_{i=1,\dots, N}\in G^r_d(C,P^1,Q^N,\alpha, \beta)$ be an exact  limit linear series on the chain.
 One can then choose basis $s_0^i, \dots s_r^i$ of $V^i$ such that $ord_{P^1}s_j=\alpha _j+j$, $ord_{Q^N}s_j^N=\beta _{\sigma (j)}+r-\sigma (j)$
 and for a fixed $i$,  the orders of vanishing of the sections $s_j^i$ at $P_i, Q_i$ are all different (and therefore are the orders of vanishing of the limit linear series)
 and $ord_{Q^i}s_j^i+ord_{P^{i+1}}s_j^{i+1}=d$.
\end{lemma}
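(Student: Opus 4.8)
The plan is to build the bases one component at a time, sweeping from $C^1$ to $C^N$, applying Lemma \ref{lem:van2psc} on each component and then permuting its output so that it becomes compatible, across the node $Q^i=P^{i+1}$, with the basis already chosen on $C^i$. The argument is insensitive to whether $C^i$ is elliptic or rational.

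Since the limit linear series is exact, on each component $C^i$ the orders of vanishing of $V^i$ at $P^i$ and at $Q^i$ are precisely $u(i)=(u_0(i),\dots,u_r(i))$ and $v(i)=(v_0(i),\dots,v_r(i))$. Hence Lemma \ref{lem:van2psc} applies to $(L^i,V^i)$ with the two points $P^i,Q^i$: it produces a basis $b_0^i,\dots,b_r^i$ of $V^i$ and a permutation $\pi_i$ with $\mathrm{ord}_{P^i}b_k^i=u_k(i)$ and $\mathrm{ord}_{Q^i}b_k^i=v_{\pi_i(k)}(i)$. Now set $\mu_0=\mathrm{id}$, recursively $\mu_i=\pi_i\circ\mu_{i-1}$, and define $s_j^i:=b_{\mu_{i-1}(j)}^i$. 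Then
\[
\mathrm{ord}_{P^i}s_j^i=u_{\mu_{i-1}(j)}(i),\qquad \mathrm{ord}_{Q^i}s_j^i=v_{\mu_i(j)}(i).
\]
Because $\mu_{i-1}$ and $\mu_i$ are permutations, for each fixed $i$ the numbers $\mathrm{ord}_{P^i}s_j^i$ run over the $r+1$ distinct values $u_0(i),\dots,u_r(i)$ and the $\mathrm{ord}_{Q^i}s_j^i$ over $v_0(i),\dots,v_r(i)$; being $r+1$ distinct orders realized by a basis of $V^i$, they are exactly the vanishing orders of the limit series at that node. For $i=1$ we have $\mu_0=\mathrm{id}$ and $\alpha(1)=\alpha$, so $\mathrm{ord}_{P^1}s_j^1=u_j(1)=\alpha_j+j$; for $i=N$ we have $\beta(N)=\beta$, so with $\sigma:=\mu_N$ we get $\mathrm{ord}_{Q^N}s_j^N=v_{\sigma(j)}(N)=\beta_{\sigma(j)}+r-\sigma(j)$.

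It remains to verify the node compatibility $\mathrm{ord}_{Q^i}s_j^i+\mathrm{ord}_{P^{i+1}}s_j^{i+1}=d$. By construction $\mathrm{ord}_{Q^i}s_j^i=v_{\mu_i(j)}(i)$ and $\mathrm{ord}_{P^{i+1}}s_j^{i+1}=u_{\mu_i(j)}(i+1)$, while the relation $\alpha_k(i+1)+\beta_k(i)=d-r$ of Definition \ref{def:eh-struct} gives $u_k(i+1)+v_k(i)=d$ for every $k$; taking $k=\mu_i(j)$ gives the claim. The only point that genuinely requires care is the bookkeeping of the composite permutations $\mu_i=\pi_i\circ\cdots\circ\pi_1$ and the observation that re-indexing the Lemma \ref{lem:van2psc} basis of $V^{i+1}$ by $\mu_i$ is exactly what lines its $P^{i+1}$-orders up with the $Q^i$-orders already fixed on $C^i$. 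Beyond this there is no real obstacle, since the refinement built into an exact limit linear series supplies the matching orders for free.
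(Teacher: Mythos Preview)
Your proof is correct and follows exactly the approach the paper intends: the paper's own proof reads, in its entirety, ``Proceed by induction on $i$ using Lemma \ref{lem:van2psc},'' followed by the remark that the orders at $P^i$ need not be increasing for $i>1$. Your argument is simply a careful unpacking of that one line, making explicit the composite permutations $\mu_i=\pi_i\circ\cdots\circ\pi_1$ and verifying that re-indexing by $\mu_{i-1}$ is precisely what aligns the $P^{i+1}$-orders with the $Q^i$-orders across each node.
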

\begin{proof}
Proceed by induction on $i$ using Lemma \ref{lem:van2psc}.
Note that here the order of vaninshing of the sections $s_0^i,\dots s_r^i$ at $P^i$ are not necessarily in ascending  order except at $P^1$ 
and the orders of vanishing at $Q^i$ are not necessarily in descending order at any $Q^i$. 
\end{proof}

 \begin{definition} \label{def:section} Let $C$ be a chain of elliptic and rational curves curve, with components $C^i, i=1,\dots, N$.
 Let $(L^i, V^i)_{i=1,\dots, N}\in G^r_d(C,P^1,Q^N,\alpha, \beta)$ be a limit linear series on the chain.
 We will say that  $(s^i) \ i=1,\dots, N$ is {\bf a section of the series} if  $s^i\in V^i$ 
 and $ord_{Q^i}s^i+ord_{P^{i+1}}s^{i+1}\ge d$.
\end{definition}

\begin{lemma}\label{prop:K=O(aP+bQ)} Let $(s^i)i=1,\dots, N$ be a limit section of the canonical linear series on a chain of elliptic and rational curves. 
There exists at least one  value of $i$ such that $C^i$ is an elliptic curve and the section $s^i$ satisfies 
$ord_{P^i}s^i+ord_{Q^i}s^i=2g-2$.
Moreover, if $\bar i$ is the index of the last elliptic component at which this is true and for any $i$ we write $ l^i= \text{ number of elliptic curves preceding } C^i$, then if  $i> \bar i$, 
$ord_{P^i}s^i<2l^i$
\end{lemma}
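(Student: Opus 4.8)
The plan is to reduce the statement to a short monotonicity argument about the vanishing orders $p_i:=ord_{P^i}s^i$ and $q_i:=ord_{Q^i}s^i$, the only non-formal input being the shape of the aspects of the limit canonical series; I assume $s^i\neq 0$ for every $i$ (otherwise one runs the argument on each maximal run of components carrying a non-zero $s^i$). First I record the constraints. Since $s^i$ is a non-zero section of $L^i$ and $\deg L^i=2g-2$, we have $p_i+q_i\le 2g-2$; Definition~\ref{def:section} gives $q_i+p_{i+1}\ge 2g-2$ at each node, and adding these two inequalities yields $p_i\le p_{i+1}$ and $q_i\ge q_{i+1}$. The same computation carried through any intervening rational components shows that if $C^{(1)},\dots,C^{(g)}$ are the elliptic components listed in order (so $l^{(j)}=j-1$) and $p^{(j)},q^{(j)}$ denote the values there, then still $q^{(j)}+p^{(j+1)}\ge 2g-2$ and $p^{(j)}\le p^{(j+1)}$. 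The structural input, which I would record (or cite) first, is the explicit description of the limit canonical series on a generic chain: on an elliptic component $C^i$ one has $L^i\cong\mathcal O_{C^i}(2l^iP^i+2(g-1-l^i)Q^i)$ and the integer $2l^i-1$ never occurs as an order of vanishing at $P^i$ of a section of $V^i$, while rational components are transparent, changing neither $l^i$ nor these vanishing data.

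Put $e_j:=p^{(j)}-2(j-1)=p^{(j)}-2l^{(j)}$. Then $e_1=p^{(1)}\ge 0$; $e_g=p^{(g)}-(2g-2)\le 0$ since $p^{(g)}\le\deg L^{(g)}$; and $e_j\neq -1$ for all $j$, since $e_j=-1$ would mean $p^{(j)}=2l^{(j)}-1$, which the description excludes. From $p^{(j+1)}\ge p^{(j)}$ we get $e_{j+1}\ge e_j-2$, and whenever $p^{(j)}+q^{(j)}\le 2g-3$ the node inequality sharpens this to $e_{j+1}\ge e_j-1$. The elementary point is: if $e_{j+1}\ge e_j-1$ and $e_j\ge 0$, then $e_{j+1}\ge -1$, hence $e_{j+1}\ge 0$ because $e_{j+1}\neq -1$; so once $e_j\ge 0$ at some index, and no equality $p+q=2g-2$ occurs from that index on, $e$ stays $\ge 0$ up to $j=g$.

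For the first assertion, suppose $p_i+q_i<2g-2$ on every elliptic component. Then $e_{j+1}\ge e_j-1$ for all $j$, and since $e_1\ge 0$ we obtain $e_g\ge 0$; with $e_g\le 0$ this forces $e_g=0$, i.e. $p^{(g)}=2g-2$, so $q^{(g)}=0$ and $p^{(g)}+q^{(g)}=2g-2$, a contradiction. For the second assertion, let $\bar j$ be the largest index with $p^{(\bar j)}+q^{(\bar j)}=2g-2$, so $C^{\bar i}=C^{(\bar j)}$. For any elliptic index $j'>\bar j$ there is no equality at $j',\dots,g$, so if $e_{j'}\ge 0$ the observation gives $e_g\ge 0$, hence $e_g=0$ and $p^{(g)}+q^{(g)}=2g-2$ with $g>\bar j$, contradicting the maximality of $\bar j$. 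Thus $e_{j'}<0$, and since $e_{j'}\neq -1$, $e_{j'}\le -2$, i.e. $p^{(j')}<2l^{(j')}$. If instead $C^i$ is rational with $i>\bar i$, set $m=l^i\ (\ge\bar j)$: by monotonicity $p_i\le p^{(m+1)}<2m=2l^i$ when $m<g$ (using the bound just proved for $C^{(m+1)}$), and $p_i\le 2g-2<2g=2l^i$ when $m=g$. Hence $p_i<2l^i$ for every $i>\bar i$, which is the claim.

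The main obstacle is not the combinatorics above but the structural input: proving that $2l^i-1$ is never a vanishing order at $P^i$ of the canonical aspect on an elliptic component (equivalently, that the aspect's $P^i$-vanishing sequence has a gap at $2l^i-1$, and that $L^i\cong\mathcal O_{C^i}(2l^iP^i+2(g-1-l^i)Q^i)$). I would derive this from additivity of Brill--Noether numbers along the chain --- which for the canonical series ($\rho=0$) forces the aspect on each elliptic component to be a refined, $\rho_1=0$ linear series --- together with the fact that the canonical series is unramified at $P^1,Q^N$ and the complementarity of vanishing sequences at the nodes; these constraints pin each aspect down uniquely and display the gap. Once that description is in hand, the proof is essentially just the one-line propagation of $e_j\ge 0$ above.
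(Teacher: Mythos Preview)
Your proof is correct and rests on the same monotonicity the paper uses ($p_i\le p_{i+1}$, with strict inequality whenever $p_i+q_i<2g-2$), but the two arguments differ in the structural input they draw on. The paper needs only the form of the \emph{line bundle} $M^i=\mathcal{O}_{C^i}(2l^iP^i+2(g-l^i-1)Q^i)$ together with the genericity of $P^i,Q^i$: it locates a transitional elliptic component $C^{i_0}$ where $u^{i_0}\ge 2l^{i_0}$ but $u^{i_1}\le 2l^{i_0}+1$ at the next elliptic curve, and then a two--line computation on $M^{i_0}$ (using $h^0(\mathcal{O}(Q))=h^0(\mathcal{O})$ while $\mathcal{O}(Q-P)$ is nontrivial) forces $u^{i_0}=2l^{i_0}$ and $v^{i_0}=2g-2l^{i_0}-2$. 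No knowledge of the space of sections $W^i$ is required.

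Your route instead hinges on the gap at $2l^i-1$ in the vanishing sequence of the \emph{aspect} $W^i$ at $P^i$. This is true --- one checks that the $P^{(j)}$-vanishing of the limit canonical is $\{j-2,\ldots,2j-4\}\cup\{2j-2,\ldots,g+j-2\}$, omitting exactly $2j-3$ --- but establishing it means actually pinning down $W^i$ via the $\rho=0$/refinedness argument you sketch, which is genuinely more work than simply quoting $M^i$. In exchange, your $e_j$-propagation is very clean and makes the second claim fall out with no extra effort. So the trade is: the paper accepts a slightly more ad hoc local computation in return for a much lighter structural input, whereas you front-load the structural analysis of the canonical aspect and reap a tidier combinatorial endgame.
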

\begin{proof}
The canonical limit linear series on a chain of rational and elliptic curves corresponds to 
\[ (M^i, W^i), \ M^i=\mathcal {O}_{C^i}(2l^iP^i+2(g-l^i-1)Q^i), \ l^i= \text{ number of elliptic curves preceding } C^i.\]
 Write $u^i=ord_{P^i}s^i, v^i=ord_{Q^i}s^i$.  For ease of notation, we write $u^{N+1}=2g-2-v^N$.
 
From the condition of limit linear series, $v^i+u^{i+1}\ge d=2g-2$.
From the fact that $s^i$ is a section of a line bundle of degree $2g-2$, $u^i+v^i\le 2g-2$.
Therefore, $u^{i+1}\ge u^i$ and equality implies that $u^i+v^{i}= d=2g-2$.

As $u^1\ge 0, u^{N+1}\le 2g-2$, there exists an   elliptic component $C^{i_0}$ such that  $u^{i_0}\ge 2l^{i_0}$ and for the next elliptic component in the chain $C^{i_1}$,
$ u^{i_1}< 2l^{i_1}=2l^{i_0}+2$ or equivalently, $ u^{i_1}\le 2l^{i_0}+1$.
Then,  also $ u^{i_0+1}\le u^{i_1}\le 2l^{i_0}+1$. It follows that $v^{i_0}\ge 2g-2-u^{i_0+1}\ge 2g-2l^{i_0}-3$.
As $s^{i_0}$ is a section of the line bundle $\mathcal {O}_{C^i}(2l^{i_0}P^{i_0}+2(g-l^{i_0}-1)Q^{i_0})$, the conditions  $u^{i_0}\ge 2l^{i_0}, v^{i_0}\ge 2g-2l^{i_0}-3$ implies $v^{i_0}= 2g-2l^{i_0}-2$ and the first claim is proved.

To prove the second claim, assume that $C^{\bar i}$ is the last elliptic component for which $ord_{P^{\bar i}}s^{\bar i}+ord_{Q^{\bar i}}s^{\bar i}=2g-2$ and that there is a
curve $C^{\hat i}$ further along for which $ord_{P^{\hat i}}s^{\hat i}\ge 2l^{\hat i}$. 
Then, the argument above starting at $\hat i$ shows that there exist a value of i larger than $\hat i$ for which the curve is elliptic and the sum of vanishing adds up to $2g-2$,
contradicting the choice of $\bar i$.
\end{proof}

\begin{proposition} \label{prop:indepsecK} Consider a generic chain of elliptic and rational curves. Let $(t_0^i), \dots (t_{\bar r}^i)$ be limit sections of the canonical linear series  such that 
the values of $i$ such that  $ ord_{P^i}t_j^i+ord_{Q^{i}}t_j^{i}=2g-2$ are all distinct.
Then the $ (t_j^i), j=1,\dots, \bar r$ are limits of linearly independent sections of the canonical  series.
\end{proposition}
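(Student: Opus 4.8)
The plan is to work on a one-parameter smoothing $\pi\colon\mathcal C\to\Delta$ of $C$ with smooth general fibre and to realise the $(t^i_j)_i$ as families of aspects of honest sections $\tilde t_j$ of $\omega_{\mathcal C/\Delta}$; since each $(t^i_j)_i$ is a limit section of the canonical series it is indeed such a family, so the task reduces to showing the $\tilde t_j$ are linearly independent on the general fibre, i.e.\ that $\sum_j c_j\tilde t_j=0$ forces all $c_j=0$. The key input is the computation underlying Lemma~\ref{prop:K=O(aP+bQ)}: on an elliptic component $C^i$ with generic marked points the aspect $M^i=\mathcal O_{C^i}(2l^iP^i+2(g-l^i-1)Q^i)$ has a unique section (up to scalar) whose vanishing orders at $P^i$ and $Q^i$ add to $2g-2$, namely the canonical section $\theta_i$ (the image of $1$ under $H^0(\mathcal O_{C^i})\hookrightarrow H^0(M^i)$), and it has $\mathrm{ord}_{P^i}\theta_i=2l^i$, $\mathrm{ord}_{Q^i}\theta_i=2(g-l^i-1)$; genericity of $P^i-Q^i$ in $\mathrm{Pic}^0(C^i)$ is exactly what forbids competing degree-zero twists, and on a rational component no such uniqueness holds, which is why only the elliptic components matter.

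I would then order the sections by their distinguished components and induct. By Lemma~\ref{prop:K=O(aP+bQ)} each $(t^i_j)_i$ has a last elliptic index $\bar\imath_j$ with $\mathrm{ord}_{P^{\bar\imath_j}}t^{\bar\imath_j}_j+\mathrm{ord}_{Q^{\bar\imath_j}}t^{\bar\imath_j}_j=2g-2$, so $t^{\bar\imath_j}_j$ is a nonzero multiple of $\theta_{\bar\imath_j}$, and by the second assertion of that lemma $\mathrm{ord}_{P^i}t^i_j<2l^i$ for every $i>\bar\imath_j$. The hypothesis makes the indices $\bar\imath_j$ pairwise distinct — in fact $\bar\imath_j$ is a distinguished index for no other $t_{j'}$ — so we may relabel with $\bar\imath_0<\dots<\bar\imath_{\bar r}$. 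Given $\sum_j c_j\tilde t_j=0$, I claim $c_{\bar r}=0$. On the right-hand tail $C^{\bar\imath_{\bar r}},C^{\bar\imath_{\bar r}+1},\dots,C^N$ every $t_j$ with $j<\bar r$ has $\mathrm{ord}_{P^i}t^i_j<2l^i$ (since $\bar\imath_j<\bar\imath_{\bar r}\le i$), whereas $t_{\bar r}$ sits at the very top of the vanishing filtration at $P^{\bar\imath_{\bar r}}$: its aspect is $\propto\theta_{\bar\imath_{\bar r}}$, with $(\mathrm{ord}_P,\mathrm{ord}_Q)=(2l,2(g-l-1))$ for $l=l^{\bar\imath_{\bar r}}$. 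Reading the relation on the $\bar\imath_{\bar r}$-aspect and then propagating it across the node $P^{\bar\imath_{\bar r}}$ and leftward along the chain via the refined linking identities $\mathrm{ord}_{Q^i}s^i+\mathrm{ord}_{P^{i+1}}s^{i+1}=2g-2$, the $\theta_{\bar\imath_{\bar r}}$-contribution of $\tilde t_{\bar r}$ cannot be matched by the others; hence $c_{\bar r}=0$, and since $\tilde t_0,\dots,\tilde t_{\bar r-1}$ still have distinct distinguished indices, induction finishes. (Heuristically this is a triangularity statement for the matrix expressing $\tilde t_j|_C$ in the basis of $H^0(C,\omega_C)$ attached to the elliptic components, but the aspect-to-section passage is not linear, so one must argue through the linking as above.)

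The step I expect to be the real obstacle is precisely ``the $\theta_{\bar\imath_{\bar r}}$-contribution cannot be matched'': on the single component $C^{\bar\imath_{\bar r}}$ it is not enough that each $t^{\bar\imath_{\bar r}}_j$ ($j<\bar r$) has $P$-order $<2l$, because cancellation among them could raise the $P$-order of their sum to $2l$ and then its $Q$-order to $2(g-l-1)$, producing a multiple of $\theta_{\bar\imath_{\bar r}}$. Ruling this out uses in an essential way that the $\tilde t_j$ are genuine global sections, so the relation persists under the linking maps to adjacent aspects; such a cancellation would then force some $t_{j'}$ with $j'<\bar r$ to have vanishing orders summing to $2g-2$ at $C^{\bar\imath_{\bar r}}$, i.e.\ a distinguished index there, contradicting the hypothesis. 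The remaining verifications — the bound $\bar r+1\le g$, the fact that the inserted rational components contribute no obstruction, and the base case of a single nonzero section — are routine.
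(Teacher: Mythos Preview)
Your overall strategy matches the paper's exactly: assume a dependence, pick out the section whose last distinguished elliptic component $C^i$ is furthest along the chain, use Lemma~\ref{prop:K=O(aP+bQ)} to isolate its aspect there, conclude its coefficient vanishes, and induct. The paper does not pass through a smoothing or global sections $\tilde t_j$; it works directly with a putative dependence among the limit sections and reads it on the aspects over $C^i$.

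The substantive difference is in the isolating step. You argue via $P$-orders: $t_{\bar r}^i$ has $\mathrm{ord}_{P^i}=2l^i$ while each $t_{j'}^i$ with $j'<\bar r$ has $\mathrm{ord}_{P^i}<2l^i$, and you correctly flag that this alone does not kill $c_{\bar r}$ because cancellation among the others could raise the $P$-order of their sum. The paper instead looks at $Q^i$: it applies the second half of Lemma~\ref{prop:K=O(aP+bQ)} at $P^{i+1}$ (not $P^i$) to get $\mathrm{ord}_{P^{i+1}}t_{j'}^{i+1}<2l^{i+1}$, feeds this through the linking inequality $\mathrm{ord}_{Q^i}t_{j'}^i\ge 2g-2-\mathrm{ord}_{P^{i+1}}t_{j'}^{i+1}$, and then argues that $t_j^i$ has the \emph{smallest} $Q^i$-order among all the aspects. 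A strictly smallest order of vanishing cannot be cancelled in a vanishing sum, so $c_j=0$ falls out with no cancellation analysis at all.

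Your own resolution of the cancellation problem has a genuine gap. You claim that if $\sum_{j'<\bar r}c_{j'}t_{j'}^{\bar\imath_{\bar r}}$ cancelled up to a multiple of $\theta_{\bar\imath_{\bar r}}$, then persistence under linking would force some individual $t_{j'}$ to acquire a distinguished index at $C^{\bar\imath_{\bar r}}$. This does not follow: a linear combination of sections each with $\mathrm{ord}_P<2l$ can perfectly well have $\mathrm{ord}_P\ge 2l$ without any summand satisfying $\mathrm{ord}_P+\mathrm{ord}_Q=2g-2$, and the linking maps do not transport such a constraint back to a single $t_{j'}$. Replace this step by the paper's $Q$-order argument via $P^{i+1}$.
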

\begin{proof}
Assume that we had a dependence between the $t_j$. 
Choose the index $i$ of the last elliptic component on which for one of the values of $j$, $ ord_{P^i}t_j^i+ord_{Q^{i}}t_j^{i}=2g-2$.
By assumption, this value of $j$ is unique.
Consider the aspects of all the sections on this component $C^i$.
The canonical limit linear series  on the component $C^i$ has line bundle given as $ M_i=\mathcal {O}_{C_i}(2l^iP_i+2(g-l^i-1)Q_i)$.
Therefore, the order of vanishing $ord_{Q^i}s_j^i=2g-2-2l^i$.
From Proposition \ref {prop:K=O(aP+bQ)}, $ ord_{P^{i+1}}s_{j'}^{i+1}<2l^{i+1}=2l_i+2$ for all $j'\not=j$.
Therefore, $ ord_{Q^{i}}s_{j'}^{i}\ge 2g-2l^{i}-3$
Therefore, $s^i_j$ has the smallest order of vanishing at $Q^i$ of all the sections.
It follows that in the linear dependence condition among the sections the coefficient of $s_j$ is 0.
Repeat then the process with the remaining sections and the resulting dependence.
From Proposition \ref{prop:K=O(aP+bQ)},  for each $j, j=1,\dots, \bar r$, there is at least one component $C^i$ at which $ ord_{P^i}t_j^i+ord_{Q^{i}}t_j^{i}=2g-2$.
Therefore, all the coefficients in the linear combination are zero.
\end{proof}

%%%%%%%%%%%%%%%%%%%%%%%%%%%%%%%%%%%%%%%%%%%%%%%%%%%%%%%%%%%%%%%%%%%%%%%%%%%%%%%%%%%%%%%%%%%%%%%%%%%%%%%%%%%%%%%%%%%%%%%%%%%%%%%%%%%%%%%%%%%%%%%%%%%%%%%%%%%%%%%%%%%%%%%%%
%TANGENT SPACE
\section{Tangent space}\label{sec:tansp}

In  \cite{CHT}, we studied the Brill Noether locus with fixed vanishing at a single point.  (that is  $G^r_d(C,P,Q,\alpha, \beta)$ with $\beta$ being the 0 sequence).
 Assume first the curve $C$ to be non-singular and let  $(L,V)\in G^r_d(C,P,Q,\alpha, 0)$
 We described  the tangent space as follows: 
There is a natural map 
\[ \mu_r: H^1(C, {\mathcal O}_C)\to  Hom(V(-(\alpha_r+r)P), H^1(C, L(-(\alpha_r+r)P))\]
One can define inductively 
\[ \mu_j: Ker \mu_{j+1}\to  Hom(V(-(\alpha_j+j)P)/ V(-(\alpha_{j+1}+(j+1))P), H^1(L(-(\alpha_j+j)P))\]
 then the tangent space to the Brill-Noether locus at a complete linear series whose ramification sequence at $P$  is precisely $\alpha$  is the orthogonal to the image of the map $\mu_0$
  (see  Lemma 3.1 in  \cite{CHT}).

Choose a basis $s_0,\dots ,s_r,$ of $V$ with $s_j\in V(-(\alpha_j+j)P)- V(-(\alpha_{j+1}+(j+1))P)$.
The natural cup-product maps 
\[V(-(\alpha_j+j)P)\otimes H^0(C, K\otimes L^{-1}((\alpha _j+j)P))\to  H^0(C, K)\]
give rise to a map 
\[ P: \oplus_{i=0}^r[<s_j>\otimes H^0(C, K\otimes L^{-1}(\alpha _j+j)P))]\to  H^0(C, K)\]
By duality, the orthogonal to the image of this generalized Petri map is the tangent space to the Brill-Noether locus with given ramification at the point $P$.
For generic curve and generic choice of point $P$, the Petri map is know to be injective for a complete linear series with precisely the required ramification(see  Proposition 3.2 in  \cite{CHT}).
This shows that the Brill-Noether locus with one ramification point is non-singular of the expected dimension on the generic pointed curve.

Our goal is to extend this result to the case of two marked points and given ramification.
Given a second point $Q$, orders of ramification $(\beta _0, \dots ,\beta _r)$ and $(L,V)\in G^r_d(C,P,Q,\alpha, \beta)$,
denote by $\mu_{0, P,\alpha} , \mu_{0, Q,\beta} $ the two maps constructed as above from the two sets of data.
Then the tangent space to the locus of linear series with given vanishing at $P, Q$ is the orthogonal to the sum of the images of the two maps.

Given a complete linear series with given ramification and using Lemma \ref{lem:van2psc}, we can  choose a basis $s_0,\dots ,s_r, $ of $V$
 whose orders of vanishing at $P, Q$ are the orders of vanishing of the linear series at these two points.
In particular, 
\[  \ s_j\in V(-(\alpha_j+j)P-(\beta_{\sigma(j)}+r-\sigma(j))Q).\]
We have a generalized Petri map 
\begin{equation}\label{eq:Petrmap} \mu_{P,Q,\alpha, \beta}: \oplus_{j=0}^r[<s_j>\otimes H^0(C, K\otimes L^{-1}(\alpha _j+j)P)+(\beta_{\sigma(j)}+r-\sigma(j))Q)]\to  H^0(C, K)\end{equation}
The orthogonal to the image of $\mu_{P,Q,\alpha, \beta}$ is the tangent space to the Brill-Noether locus $G^r_d(C,P,Q,\alpha, \beta)$ with ramification at the points $P, Q$.
We want to show that this map is injective for generic  $C,P, Q$ 

\begin{proposition} Consider $g,r,d,\alpha=(\alpha_0,\dots \alpha_r), \beta=(\beta_0,\dots,\beta_r)$ non-negative integers satisfying 
\[ g-d+r\ge 0, \alpha_0\le \dots\le \alpha_r, \beta_0\ge \dots\ge \beta_r,  g-(r+1)(g-d+r)-\sum \alpha_j-\sum \beta_j\ge 0.\]
Let $(L, V)\in G^r_d(C,P,Q,\alpha, \beta)$ be a linear series with ramification at $P, Q$ precisely $\alpha, \beta$.
Then the Petri map for $(L,V)$ is injective.
\end{proposition}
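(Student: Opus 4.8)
The plan is to reduce, via the standard degeneration and semicontinuity argument, to the case of a refined limit linear series on a generic chain $C=C^1\cup\dots\cup C^N$ of elliptic and rational curves of genus $g$, with the two marked points taken to be $P=P^1$ and $Q=Q^N$. Since $\rho(g,d,r,\alpha,\beta)\ge 0$, such a chain carries a limit linear series with exact ramification $\alpha$ at $P^1$ and $\beta$ at $Q^N$, and by Lemma \ref{lem:van2pchain} its aspects $(L^i,V^i)$ admit bases $s_0^i,\dots,s_r^i$ compatible across the nodes, with $ord_{P^1}s_j=\alpha_j+j$, $ord_{Q^N}s_j^N=\beta_{\sigma(j)}+r-\sigma(j)$ and $ord_{Q^i}s_j^i+ord_{P^{i+1}}s_j^{i+1}=d$. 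As the rank of the Petri map in \eqref{eq:Petrmap} is lower semicontinuous in families, it suffices to show that the associated limit Petri map on $C$ is injective.

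Following the method of \cite{LOTZ1,LOTZ2}, I would not try to show that the kernel vanishes but instead exhibit enough linearly independent elements in the image. For each $j$, choose a basis $t_{j,1},\dots,t_{j,m_j}$ of $H^0\bigl(C,K\otimes L^{-1}((\alpha_j+j)P+(\beta_{\sigma(j)}+r-\sigma(j))Q)\bigr)$; since $s_j$ vanishes at $P$ and $Q$ to exactly the orders that cancel the poles of the $t_{j,k}$, the products $s_j\cdot t_{j,k}$ are (limits of) sections of the canonical series, and the injectivity of \eqref{eq:Petrmap} is equivalent to the linear independence of the collection of all these products. By Proposition \ref{prop:indepsecK}, this independence follows once one can attach to each pair $(j,k)$ a \emph{distinct} elliptic component $C^{i(j,k)}$ at which $ord_{P^{i}}(s_j\cdot t_{j,k})+ord_{Q^{i}}(s_j\cdot t_{j,k})=2g-2$; recall from Lemma \ref{prop:K=O(aP+bQ)} that the canonical aspect on $C^i$ is $M^i=\mathcal O_{C^i}(2l^iP^i+2(g-l^i-1)Q^i)$, so this equality just says the product is, up to scalar, the unique section of $M^i$ with the prescribed vanishing orders. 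The whole problem is thereby reduced to constructing such an injective assignment.

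Constructing this assignment is the heart of the proof. One tracks, component by component, the vanishing orders of the $s_j^i$ and of the aspects of the $t_{j,k}$ using the linked relations at the nodes; this determines, for each product section, the short list of components where the extremal equality with $2g-2$ can occur. Genericity of the elliptic curves $E_i$ is used to guarantee that on every component the sections in play vanish to exactly the order forced by the linked data and that no two distinct basis elements are compelled onto the same component; the hypotheses that the ramification is \emph{exact} at $P$ and $Q$ and that $\rho\ge 0$ are precisely what makes this count balance. I expect the main obstacle to be exactly this bookkeeping: matching the $\sum_j m_j$ basis elements of the domain of \eqref{eq:Petrmap} to components, each of which can absorb only a bounded number of them (governed by $\dim V^i$ and by the canonical vanishing sequence on $C^i$) --- a combinatorial matching of the same flavor as the skew-tableaux description of $G^r_d(C,P,Q,\alpha,\beta)$. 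Once the assignment is shown injective, Proposition \ref{prop:indepsecK} yields the linear independence of the products, hence the injectivity of the limit Petri map, and semicontinuity completes the proof for a general curve.
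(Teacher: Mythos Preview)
Your outline is essentially the paper's proof: degenerate to a generic chain, take the bases from Lemma~\ref{lem:van2pchain}, twist each $s_k$ down by $(\alpha_k+k)P^i+(\beta_{\sigma(k)}+r-\sigma(k))Q^i$, multiply against a compatible basis $(\bar s_j^{i,k})$ of the Serre dual aspect, and feed the resulting products of canonical sections into Proposition~\ref{prop:indepsecK}.

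The one place where you diverge is in over-anticipating difficulty. You describe the injective assignment $(j,k)\mapsto i(j,k)$ as ``the heart of the proof,'' a delicate ``combinatorial matching of the same flavor as the skew-tableaux description.'' In the paper it is a two-line observation, and no tableau bookkeeping enters. The point is this: if the product $s_k^{'i}\bar s_j^{i,k}$ has vanishing orders at $P^i,Q^i$ summing to $2g-2$, then \emph{each factor separately} must attain maximal vanishing for its own line bundle (the sum of vanishings of a section is bounded by the degree). In particular $ord_{P^i}s_k^i+ord_{Q^i}s_k^i=d$, which forces $L^i=\mathcal O_{C^i}(u_k^iP^i+v_k^iQ^i)$. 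But the $s_k$ are all sections of the \emph{same} line bundle $L^i$, with pairwise distinct vanishing orders $u_k^i$; so if two different values of $k$ both achieved this on $C^i$ we would have $(u_k^i-u_{k'}^i)(P^i-Q^i)\sim 0$, contradicting the genericity of $P^i-Q^i$. The same argument applied to the $\bar s_j^{\cdot,k}$ (a basis of a single linear series for each fixed $k$) handles distinctness in $j$. So the injectivity of the assignment is automatic from genericity of the nodes plus the ``same linear series'' constraint; no balancing count using $\rho\ge0$ or exact ramification is needed at this step.
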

\begin{proof} Rather than proving that the kernel of $\mu_{P,Q,\alpha, \beta}$ is 0, we will show that the image has dimension at least 
$ (r+1)(g-d+r)+\sum \alpha_j+\sum \beta_j$.

We can degenerate $C$ to a generic chain of elliptic and rational curves and prove the result instead for limit linear series on such a curve.
Up to some blow ups and desingularizations, we can assume that the linear series is refined.

Choose   basis $(s_0^i), \dots ,(s_r^i)$ of the original linear series as in Lemma \ref{lem:van2pchain}.
By assumption, the order of vanishing of $s_k^1$ at $P^1$ is $\alpha_k+k$.
From the condition that $s_k^1$ is a section of a line bundle of degree $d$ on $C^1$, $ord_{Q^1}s_k^1\le d-\alpha_k-k$.
Then, from the choice in our basis  $ord_{P^2}s_k^2\ge \alpha_k+k$.
With the same argument, the inequality holds in fact at every $P^i$.
Similarly, the vanishing at each $Q^i$ is at least $\beta_{\sigma(k)}+r-\sigma(k)$.
This allows us to substitute $L^i$ with   $L^{'i}=L^i(-(\alpha _k+k)P^i+(\beta_{\sigma(k)}+r-\sigma(k))Q^i)$ and $(s_k^i), i=1,\dots, N$ by the corresponding sections $(s_k^{'i})$ of $L'_i$.
From $ ord_{P^{i+1}}s_k^{i+1}+ord_{Q^i}s_k^i=d$, we obtain 
\[ ord_{P^{i+1}}s_k^{'i+1}+ord_{Q^i}s_k^{'i}=d-\alpha_k-\beta_{\sigma (k)}-r-k+\sigma (k),\]
giving rise to a limit linear series associated to $L(-(\alpha_k+k)P^1-(\beta_{\sigma(k)}+r-\sigma(k))Q^N)$ (of dimension zero, as we have a single section).

For each $k$, choose basis of the  Serre dual linear series associated to  $K\otimes L^*((\alpha_k+k)P^1+(\beta_{\sigma (k)}+r-\sigma(k))Q^N)$
  \[  ( \bar s_0^{i,k}), \dots , ( \bar s_{r_k}^{i,k}), \ i=1,\dots, N, k=0,\dots , r , \ \ r_k\ge g-d+r+\alpha_k+\beta_{\sigma (k)}+k-\sigma(k)  \]
satisfying $ord_{P_i}\bar s_j^{i,k}+ord_{Q_i}\bar s_j^{i,k}=2g-2-d+\alpha_k+k+\beta_{\sigma(k)}+r-\sigma (k)$.
From the compatibility conditions for the vanishing at the nodes $Q^i, P^{i+1}$ for both  $(s_k^{'i}) ( \bar s_j^{i,k})$, we obtain compatibility conditions at the nodes  for their product  $s_k^{'i}\bar s_0^{i,k}$.
By Proposition \ref{prop:K=O(aP+bQ)},  for each $k=1,\dots, r, j=1,\dots , r_k$ 
there is at least one elliptic component $C^i$ at which the sums of the vanishing orders at $P^i, Q^i $of  $s_k ^{'i} \bar s_j^{i,k}$ add up to $2g-2$. 
This requires  the sums of the vanishing orders at $P^i, Q^i $ of  both $s_k ^{'i}$ and $ \bar s_j^{i,k}$ add up to the corresponding degrees of the line bundles.
As the $(s_k ^{'i})$ are derived from the $(s_k ^i)$, the vanishing orders at $P^i, Q^i $ of the $s_k ^I$ at this component $C^i$ must also add to $d$.
Because the $s_k$ are part of the same linear series,  the assumption that the points $P^i,Q^i$ are general on $C^i$ means that all the values of $i$ are different.
Then, Proposition \ref{prop:indepsecK}, shows the independence claimed.
\end{proof}

%%%%%%%%%%%%%%%%%%%%%%%%%%%%%%%%%%%%%%%%%%%%%%%%%%%%%%%%%%%%%%%%%%%%%%%%%%%%%%%%%%%%%%%%%%%%%%%%%
\section{Components of the space of Limit linear series on chains of elliptic curves}\label{LLSdesc}

In this section, we describe the components of the moduli space of limit linear series on a chain of elliptic curves. 
We start with its basic elements, namely linear series on a single elliptic curve.

\begin{lemma} \label{exfibanul} Consider an elliptic curve $C$, two points $P, Q\in C$ such that $P-Q$ is not a torsion point, 
 and integers $0\le u_0<\dots<u_r\le d$, $d\ge v_0>\dots>v_r\ge 0$.
There exist linear series of degree $d$ and dimension $r$ with vanishing  at least  $ u_0,\dots ,u_r$ at $P$, $v_0, \dots v_r$ at $Q$ 
if and only if $u_j+v_j\le d$ and equality holds for at most one  $j\in \{0,\dots, r\} $.
Moreover, if the above conditions hold, the space of limit linear series satisfying the given ramification conditions 
is an open set in the intersection of two Schubert cycles or of a bundle of intersections of Schubert cycles over the elliptic curve depending on whether there exists a $j$ with $u_j+v_j=d$ or not. 
It has dimension 
$$\sum_{j=0,\dots, r} (d-u_j-v_j)-r=(r+1)d-r-\sum_{j=0,\dots, r} (u_j+v_j)$$
\end{lemma}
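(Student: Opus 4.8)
The plan is to present the moduli space as an open subset of a family of intersections of two Schubert varieties over $\operatorname{Pic}^d C$, reading off everything from Riemann--Roch on the elliptic curve together with classical Schubert calculus. A linear series of degree $d$ and dimension $r$ on $C$ is a pair $(L,V)$ with $L\in\operatorname{Pic}^d C$ and $V\subseteq H^0(C,L)$ an $(r+1)$-dimensional subspace; by Riemann--Roch $h^0(C,L)=d$ for $d\ge 1$, so assume $d>r$ (the remaining cases being degenerate). Ordering sections of $L$ by their vanishing at $P$ defines a complete flag $F^{P}_\bullet(L)$ of $H^0(C,L)$ with $F^P_i(L)=H^0(C,L(-(d-i)P))$, and similarly $F^Q_\bullet(L)$. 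The condition ``vanishing at least $u$ at $P$'' is exactly that $V$ lie in the Schubert variety $\Sigma^P(L)\subseteq G(r+1,H^0(C,L))$ attached to $F^P_\bullet(L)$ and $u$, of codimension $\sum_j(u_j-j)=\sum_j u_j-\binom{r+1}{2}$; likewise $\Sigma^Q(L)$ has codimension $\sum_j v_j-\binom{r+1}{2}$. Thus the parameter space sits inside the relative Grassmannian over $\operatorname{Pic}^d C\cong C$, with fibre over $L$ the intersection $\Sigma^P(L)\cap\Sigma^Q(L)$.

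First I would establish the ``only if'' direction and identify the admissible $L$. If $(L,V)$ has the prescribed vanishing then, for each $j$, the subspaces $V\cap F^P_{d-u_j}(L)$ and $V\cap F^Q_{d-v_j}(L)$ of the $(r+1)$-plane $V$ have dimensions at least $r+1-j$ and $j+1$, so they meet nontrivially; hence $H^0(C,L(-u_jP-v_jQ))\ne 0$, and since this line bundle has degree $d-u_j-v_j$ we get $u_j+v_j\le d$, with equality forcing $L\cong\mathcal O(u_jP+v_jQ)$. Because $P-Q$ is not torsion, for distinct indices $j$ the bundles $\mathcal O(u_jP+v_jQ)$ with $u_j+v_j=d$ are pairwise non-isomorphic, so equality can hold for at most one $j$; when it holds for $j=j_0$, $L$ is forced to equal $L_0:=\mathcal O(u_{j_0}P+v_{j_0}Q)$. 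This yields the numerical constraints and the non-existence when equality holds twice.

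For the converse and the dimension, the relative position of $F^P_\bullet(L)$ and $F^Q_\bullet(L)$ is governed by $F^P_i(L)\cap F^Q_j(L)=H^0(C,L(-(d-i)P-(d-j)Q))$, of dimension $\max(i+j-d,0)$ unless $i+j=d$ and $L\cong\mathcal O((d-i)P+(d-j)Q)$, when it is $1$. If no $u_j+v_j=d$, then for $L$ outside a finite subset the two flags are in general position, and the classical criterion (the intersection of two general-position Schubert varieties, a Richardson variety, is nonempty and irreducible of the expected dimension exactly when $\lambda_i+\mu_{r+2-i}\le d-r-1$ for all $i$, which unwinds to $u_j+v_j\le d-1$ for all $j$) gives fibre dimension $\dim G(r+1,d)-\sum(u_j+v_j)+r(r+1)$; the general point lies in the product of the open Schubert cells, hence has vanishing exactly $u,v$. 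Adding the dimension $1$ of $\operatorname{Pic}^d C$ gives $(r+1)d-r-\sum(u_j+v_j)$, and the moduli space is the corresponding open subset of this bundle of Schubert intersections. If $u_{j_0}+v_{j_0}=d$ for a unique $j_0$, then $L=L_0$ is forced and the two flags of $L_0$ are general-position except for the single coincidence $\dim\bigl(F^P_{v_{j_0}}(L_0)\cap F^Q_{u_{j_0}}(L_0)\bigr)=1$; every admissible $V$ contains the unique section $\tau$ with $\operatorname{div}\tau=u_{j_0}P+v_{j_0}Q$, and $V\mapsto V/\langle\tau\rangle$ identifies the moduli space with an intersection of two Schubert varieties in $G(r,H^0(C,L_0)/\langle\tau\rangle)$ for the sequences $u,v$ with the $j_0$-th term deleted; these reduced flags turn out to be in general position, so the previous case (one dimension lower) applies and the bookkeeping again collapses to $(r+1)d-r-\sum(u_j+v_j)$.

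I expect the main obstacle to be the Schubert-calculus content of the second case: verifying that quotienting by $\tau$ both removes the lone coincidence of the flags and introduces no new ones, so that the reduction to the general-position case is legitimate; this is exactly where the explicit shape $L_0=\mathcal O(u_{j_0}P+v_{j_0}Q)$ and $P-Q\notin\operatorname{Tors}$ are used. Alternatively one can bypass the reduction by computing $\dim\bigl(\Sigma^P(L_0)\cap\Sigma^Q(L_0)\bigr)$ directly and checking it exceeds the transverse value by precisely $1$, offsetting the loss of the $\operatorname{Pic}^d C$ factor. The remaining ingredients --- Riemann--Roch on an elliptic curve and the nonemptiness and expected-dimension statement for general-position Schubert intersections --- are standard, so the real work is keeping the sequences, the two flags, and the $\binom{r+1}{2}$ normalisations aligned across the two cases.
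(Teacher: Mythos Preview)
Your proposal is correct and the ``only if'' half is essentially identical to the paper's, but the existence and dimension arguments take a genuinely different route. The paper proceeds by an elementary explicit construction: having chosen $L$ (generic, or equal to $\mathcal O(u_{j_0}P+v_{j_0}Q)$ when there is one equality), it picks \emph{generic} sections $s_j\in H^0(L(-u_jP-v_jQ))$ and proves directly, by a short inductive argument using $u_0<u_1<\cdots$ and the genericity of $P,Q,L$, that the $s_j$ are linearly independent; the dimension then drops out of counting the parameters for $L$ and each $s_j$. The Schubert-cycle description in the paper is recorded only afterward as a structural remark, not as the engine of the proof. You instead make the Schubert calculus do the work: verify the two flags are in general position for generic $L$ (or have exactly one coincidence when $L=L_0$), invoke the standard Richardson-variety nonemptiness and expected-dimension statement, and in the boundary case reduce to one dimension lower by quotienting by $\tau$. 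Your approach is more conceptual and handles both cases uniformly once the flag transversality is checked; the paper's approach is more self-contained, requiring no outside Schubert facts, and makes the role of the non-torsion hypothesis very concrete at the single step where one must rule out $H^0(L(-u_1P-v_0Q))=H^0(L(-u_0P-v_0Q))$. The point you flag as the main obstacle---that quotienting by $\tau$ removes the lone coincidence without creating new ones---is exactly the place where non-torsion enters, and is the analogue of the paper's one-line check that $L$ cannot simultaneously equal two distinct $\mathcal O(aP+bQ)$.
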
 

\begin{proof} Assume that such a linear series exists. Then there is a line bundle $L$ of degree $d$ and an $r+1$-dimensional space of sections $V$ satisfying the vanishing conditions.
In particular, $\dim V(-u_jP)\ge j+1, \dim V(-v_jQ)\ge r+1-j$.
 As these two vector spaces live inside a vector space of dimension $r+1$, they intersect. 
 Therefore, there is a section that vanishes at $P$ with order at least $u_j$ and at $Q$ with order at least $v_j$. This implies that $u_j+v_j\le d$.
 Moreover, equality implies that $L=\mathcal {O}(u_jP+v_jQ)$. As $P-Q$ is not a torsion point, this can happen for at most one value of $j$.
 So the conditions are necessary. 
 
 They are also sufficient, if $u_{j_0}+v_{j_0}= d$, take $L=\mathcal {O}(u_{j_0}P+v_{j_0}Q)$. If not such $j_0$ exists, take $L$ generic.
 Then choose a generic section $s_j\in H_0(L(-(u_jP+v_jQ)))$. 
  We claim that such a generic choice produces a set of linearly independent sections. Assume this is not the case. There exist then some constants, not all of them zero, such that 
 \[ \lambda _0s_0+\lambda_1s_1+\dots+\lambda_rs_r=0\]
 If $\lambda _0\not= 0$, then $s_0$ is in the span of $s_1,\dots s_r$ and therefore in $H^0(L(-u_1P-v_0Q))$.
 Note that $u_1>u_0$ implies that $H^0(L(-u_1P-v_0Q))\subseteq H^0(L(-u_0P-v_0Q)$.
  The section $s_0$ was chosen generically in $H^0(L(-u_0P-v_0Q)))$. Therefore,  $s_0$ vanishing at $P$ with order $u_1$ would require $H^0(L(-u_1P-v_0Q))=H^0(L(-u_0P-v_0Q))$. 
  From $u_1>u_0$, this  can only happen if $L(-u_1P-v_0Q)=\mathcal{O}, L(-u_0P-v_0Q)=\mathcal{O}(P)$. 
    But $L$ was chosen to be either generic or equal to $\mathcal {O}(u_iP+v_iQ)$.
  Then, the genericity of $P,Q$ prevent it from also being equal to $\mathcal {O}(u_1P+v_0Q)$. Therefore, $\lambda_0=0$. One can then proceed similarly to show that $\lambda_1=0$ and so on.  
  So, a generic choice of sections makes them linearly independent and therefore $r+1$-dimensional spaces of such sections exist.
  
   The sections $s_j$  are in  $L(-u_jP-v_jQ)$ which has dimension $d-u_j-v_j$ if  $L\not= \mathcal {O}(u_jP+v_jQ)$ and dimension one if equality holds.
     The  line bundle is uniquely determined if  for some $j$, $u_j+v_j=d$, otherwise, it varies in a one-dimensional family.
The choice of the line bundles and the sections up to multiplication with a constant determines the linear series.

Note that not all linear series with the given vanishing have a basis as described.
 This can fail because at least one of the orders of vanishing at $P$ (resp.  $Q$) is strictly bigger than the given $u_i$ (resp $v_i$).
 It can also fail because there is a  sections vanishing at $P$ with order $u_{j_1} $ and at $Q$ with order $v_{j_2}, {j_1}\not={j_2}$, while some other section vanishes with order $v_{j_1}$ at $Q$ and some  
 $u_{j_3}$ at $P$ where $j_3$ may or may not be equal to $j_2$ (we called this  `` swap'').
 In both situation, the linear series so obtained are in the closure of the open set described. 
 In fact, when the exceptional behavior happens, one can construct a family of subspaces of dimension $r+1$ of $H^0(L)$ whose generic member has the generic behavior 
 while some special member has  the particular behavior.
 Therefore, the set of linear series has the stated dimension.
  
   The spaces of section 
 \[ H^0(L(-u_rP))\subset H^0(L(-u_{r-1}P))\subset \dots \subset H^0(L(-u_0P)), \]
  \[  H^0(L(-v_0Q))\subset H^0(L(-v_{1}Q))\subset \dots \subset H^0(L(-v_rQ))\] 
 have dimension $d-u_r, d-u_{r-1},\dots, d-u_0, \ \ d-v_0, d-v_1,\dots, d-v_r$.
 A linear series satisfies the required conditions if 
 \[\dim (V\cap H^0(L(-u_rP)))\ge 1,  \dim (V\cap H^0(L(-u_{r-1}P)))\ge 2\dots \dim (V\cap H^0(L(-u_0P)))\ge r+1\]
  \[ \dim (V\cap H^0(L(-v_0Q)))\ge 1, \dim (V\cap H^0(L(-v_{1}Q)))\ge 2 \dots \dim (V\cap H^0(L(-v_rQ)))\ge r+1 \] 
  Therefore, the space of linear series is an open set in the intersection of the two Schubert cycles when the line bundle is determined and of a bundle 
  of intersection of Schubert cycles over the Picard variety of the curve when the line bundle is free to vary.
    \end{proof}  

\begin{figure}[h!]
\begin{tikzpicture}[scale=.6]

\draw (0,0)--(0,-5);
\draw[very thick] (0,0)--(0,-3);%
\draw (1,0)--(1,-5);
\draw (2,1)--(2,-4);
\draw (3,2)--(3,-4);
\draw (4,2)--(4,-3);
\draw[very thick](4,0)--(4,-3);
\draw[very thick] (0,0)--(4,0);
\draw (2,1)--(4,1);
\draw (3,2)--(4,2);
\draw (0,-1)--(4,-1);
\draw (0,-2)--(4,-2);
\draw[very thick] (0,-3)--(4,-3);
\draw (0,-4)--(3,-4);
\draw (0,-5)--(1,-5);

\draw[dotted](0,3)--(0,0);
\draw[dotted](1,3)--(1,0);
\draw[dotted](2,3)--(2,1);
\draw[dotted](3,3)--(3,2);
\draw[dotted](4,3)--(4,2);
\draw[dotted](0,1)--(2,1);
\draw[dotted](0,2)--(3,2);

\draw[dotted](0,-5)--(0,-6);
\draw[dotted](1,-5)--(1,-6);
\draw[dotted](2,-4)--(2,-6);
\draw[dotted](3,-4)--(3,-6);
\draw[dotted](4,-3)--(4,-6);
\draw[dotted](3,-4)--(4,-4);
\draw[dotted](1,-5)--(4,-5);

\end{tikzpicture}
\caption{ Skew shape $r=3, g-1-d+r=2, \alpha=(0,0,1,2), \beta=(2, 1, 1,0)$.}
\label{fig:skewshape}
\end{figure}
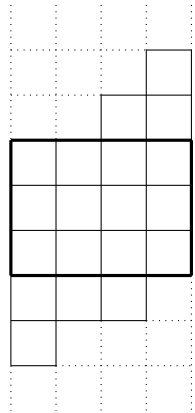
\begin{definition} \label{def:sshape} A {\bf Skew Shape}  is obtained as follows: take a rectangle with $r+1$ columns and $g-d+r$ rows.
 Number  the columns  left to right from $0$ to $r$ and the rows  top to bottom from 0 to $g-1-d+r$. 
 Add to column $j$ of this rectangle  $\alpha_j$ boxes on the top (negative) side and $\beta_j$ boxes to the bottom.
  Column $j$ now goes from position $-\alpha_j$ to $g-1-d+r +\beta_j$ and contains $g-d+r +\beta_j+\alpha_j$ boxes.
\end{definition}
 We consider the  $r+1$ columns containing the skew shape as infinite columns extending above and below the  box.
Give initial weight 1 to all the boxes  directly above the upper contour of this shape and weight zero to the rest.
Consider possible ways of filling the shape, including perhaps some boxes directly above or below the shape, with numbers between 1 and g, each with weight 1 or $-1$.
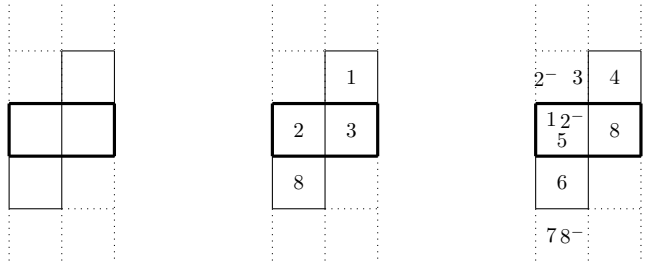
\begin{figure}[h!]
\begin{tikzpicture}[scale=.7]
\begin{scope}
\draw (0,0)--(0,-2);
\draw[very thick] (0,0)--(2,0);
\draw[very thick] (0,0)--(0,-1);
\draw[very thick] (0,-1)--(2,-1);
\draw[very thick](2,0)--(2,-1);
\draw (1,1)--(1,-2);
\draw (2,1)--(2,-1);
\draw (1,1)--(2,1);
\draw (0,-2)--(1,-2);
\draw[dotted](0,2)--(0,0);
\draw[dotted](1,2)--(1,0);
\draw[dotted](2,2)--(2,1);
\draw[dotted](0,1)--(1,1);
\draw[dotted](0,-2)--(0,-3);
\draw[dotted](1,-2)--(1,-3);
\draw[dotted](2,-1)--(2,-3);
\draw[dotted](1,-2)--(2,-2);
\end{scope}

\begin{scope}[xshift=5cm]
\draw (0,0)--(0,-2);
\draw[very thick] (0,0)--(2,0);
\draw[very thick] (0,0)--(0,-1);
\draw[very thick] (0,-1)--(2,-1);
\draw[very thick](2,0)--(2,-1);
\draw (1,1)--(1,-2);
\draw (2,1)--(2,-1);
\draw (1,1)--(2,1);
\draw (0,-2)--(1,-2);
\draw[dotted](0,2)--(0,0);
\draw[dotted](1,2)--(1,0);
\draw[dotted](2,2)--(2,1);
\draw[dotted](0,1)--(1,1);
\draw[dotted](0,-2)--(0,-3);
\draw[dotted](1,-2)--(1,-3);
\draw[dotted](2,-1)--(2,-3);
\draw[dotted](1,-2)--(2,-2);
\node at (.5,-.5)[scale=.8] {2};
\node at (.5,-1.5) [scale=.8]{8};
\node at (1.5,-.5) [scale=.8]{3};
\node at (1.5,.5) [scale=.8]{1};
\end{scope}

\begin{scope}[xshift=10cm]
\draw (0,0)--(0,-2);
\draw[very thick] (0,0)--(2,0);
\draw[very thick] (0,0)--(0,-1);
\draw[very thick] (0,-1)--(2,-1);
\draw[very thick](2,0)--(2,-1);
\draw (1,1)--(1,-2);
\draw (2,1)--(2,-1);
\draw (1,1)--(2,1);
\draw (0,-2)--(1,-2);
\draw[dotted](0,2)--(0,0);
\draw[dotted](1,2)--(1,0);
\draw[dotted](2,2)--(2,1);
\draw[dotted](0,1)--(1,1);
\draw[dotted](0,-2)--(0,-3);
\draw[dotted](1,-2)--(1,-3);
\draw[dotted](2,-1)--(2,-3);
\draw[dotted](1,-2)--(2,-2);
\node at (.2,.5)[scale=.8] {$2^-$};
\node at (.8,.5)[scale=.8] {$3$};
\node at (.3,-.3)[scale=.8] {$1$};
\node at (.7,-.3)[scale=.8] {$2^-$};
\node at (.5,-.7)[scale=.8] {$5$};
\node at (.5,-1.5) [scale=.8]{6};
\node at (.3,-2.5)[scale=.8] {$7$};
\node at (.7,-2.5)[scale=.8] {$8^-$};
\node at (1.5,-.5) [scale=.8]{8};
\node at (1.5,.5) [scale=.8]{4};
\end{scope}
\end{tikzpicture}
\caption{ A skew shape  $r=1, g= d=8, \alpha=(0,1), \beta=(1,0)$ and two fillings.}
\label{fig:fillskewshape}
\end{figure}

%ADMISSIBLE FILLING
\begin{definition}\label{def:admfil}
A  filling of the skew shape is an assignment to the squares in the box of numbers among $\ 1,\dots, g$, each with a positive or negative weight of 1. 
The $i$-weight $\mathbf {w^i(a,b)}$ of a box situated in spot $(a,b)$  is the sum of the weights of the numbers $i'\le i$ that appear on it plus the initial weight of the box.
We  call the filling  an  {\em admissible filling}  if it satisfies
\begin{enumerate}[(a)]
\item Each number appears at most once with weight one (but may appear multiple times with weight $-1$).
\item Each box has at most one occurrence of a given number (including both positive and negative weight).
\item For each choice of an $i, 0\le i\le g$, the $i$-weight of a box  is  either 0 or 1.
\item  The $i$-weight of any given box is greater than or equal to the weight of a box  to its right.
\item   The $i$-weight of any given box is greater than or equal to the weight of a box   below.
\item  The $g$-weight of a box is 1 for the boxes lying above the lower contour of the skew shape and 0 for those lying below that lower contour.
\end{enumerate}
\end{definition}

\begin{figure}[h!]
\begin{tikzpicture}[scale=.55]
\begin{scope}
\fill [color=pink] (0,2)--(0,0)--(1,0)--(1,1)--(2,1)--(2,2)--(0,2);
\draw (0,0)--(0,-2);
\draw[very thick] (0,0)--(2,0);
\draw[very thick] (0,0)--(0,-1);
\draw[very thick] (0,-1)--(2,-1);
\draw[very thick](2,0)--(2,-1);
\draw (1,1)--(1,-2);
\draw (2,1)--(2,-1);
\draw (1,1)--(2,1);
\draw (0,-2)--(1,-2);
\draw[dotted](0,2)--(0,0);
\draw[dotted](1,2)--(1,0);
\draw[dotted](2,2)--(2,1);
\draw[dotted](0,1)--(1,1);
\draw[dotted](0,-2)--(0,-3);
\draw[dotted](1,-2)--(1,-3);
\draw[dotted](2,-1)--(2,-3);
\draw[dotted](1,-2)--(2,-2);
\node at (1,-3.5)[scale=.7]{$w_0$};
\end{scope}

\begin{scope}[xshift=3cm]
\fill [color=pink] (0,2)--(0,-1)--(1,-1)--(1,1)--(2,1)--(2,2)--(0,2);
\draw (0,0)--(0,-2);
\draw[very thick] (0,0)--(2,0);
\draw[very thick] (0,0)--(0,-1);
\draw[very thick] (0,-1)--(2,-1);
\draw[very thick](2,0)--(2,-1);
\draw (1,1)--(1,-2);
\draw (2,1)--(2,-1);
\draw (1,1)--(2,1);
\draw (0,-2)--(1,-2);
\draw[dotted](0,2)--(0,0);
\draw[dotted](1,2)--(1,0);
\draw[dotted](2,2)--(2,1);
\draw[dotted](0,1)--(1,1);
\draw[dotted](0,-2)--(0,-3);
\draw[dotted](1,-2)--(1,-3);
\draw[dotted](2,-1)--(2,-3);
\draw[dotted](1,-2)--(2,-2);
\node at (1,-3.5)[scale=.7]{$w_1$};
\end{scope}

\begin{scope}[xshift=6cm]
\fill [color=pink] (0,2)--(0,1)--(2,1)--(2,2)--(0,2);
\draw (0,0)--(0,-2);
\draw[very thick] (0,0)--(2,0);
\draw[very thick] (0,0)--(0,-1);
\draw[very thick] (0,-1)--(2,-1);
\draw[very thick](2,0)--(2,-1);
\draw (1,1)--(1,-2);
\draw (2,1)--(2,-1);
\draw (1,1)--(2,1);
\draw (0,-2)--(1,-2);
\draw[dotted](0,2)--(0,0);
\draw[dotted](1,2)--(1,0);
\draw[dotted](2,2)--(2,1);
\draw[dotted](0,1)--(1,1);
\draw[dotted](0,-2)--(0,-3);
\draw[dotted](1,-2)--(1,-3);
\draw[dotted](2,-1)--(2,-3);
\draw[dotted](1,-2)--(2,-2);
\node at (1,-3.5)[scale=.7]{$w_2$};
\end{scope}

\begin{scope}[xshift=9cm]
\fill [color=pink] (0,2)--(0,0)--(1,0)--(1,1)--(2,1)--(2,2)--(0,2);
\draw (0,0)--(0,-2);
\draw[very thick] (0,0)--(2,0);
\draw[very thick] (0,0)--(0,-1);
\draw[very thick] (0,-1)--(2,-1);
\draw[very thick](2,0)--(2,-1);
\draw (1,1)--(1,-2);
\draw (2,1)--(2,-1);
\draw (1,1)--(2,1);
\draw (0,-2)--(1,-2);
\draw[dotted](0,2)--(0,0);
\draw[dotted](1,2)--(1,0);
\draw[dotted](2,2)--(2,1);
\draw[dotted](0,1)--(1,1);
\draw[dotted](0,-2)--(0,-3);
\draw[dotted](1,-2)--(1,-3);
\draw[dotted](2,-1)--(2,-3);
\draw[dotted](1,-2)--(2,-2);
\node at (1,-3.5)[scale=.7]{$w_3$};
\end{scope}

\begin{scope}[xshift=12cm]
\fill [color=pink] (0,2)--(0,0)--(2,0)--(2,2)--(0,2);
\draw (0,0)--(0,-2);
\draw[very thick] (0,0)--(2,0);
\draw[very thick] (0,0)--(0,-1);
\draw[very thick] (0,-1)--(2,-1);
\draw[very thick](2,0)--(2,-1);
\draw (1,1)--(1,-2);
\draw (2,1)--(2,-1);
\draw (1,1)--(2,1);
\draw (0,-2)--(1,-2);
\draw[dotted](0,2)--(0,0);
\draw[dotted](1,2)--(1,0);
\draw[dotted](2,2)--(2,1);
\draw[dotted](0,1)--(1,1);
\draw[dotted](0,-2)--(0,-3);
\draw[dotted](1,-2)--(1,-3);
\draw[dotted](2,-1)--(2,-3);
\draw[dotted](1,-2)--(2,-2);
\node at (1,-3.5)[scale=.7]{$w_4$};
\end{scope}

\begin{scope}[xshift=15cm]
\fill [color=pink] (0,2)--(0,-1)--(1,-1)--(1,0)--(2,0)--(2,2)--(0,2);
\draw (0,0)--(0,-2);
\draw[very thick] (0,0)--(2,0);
\draw[very thick] (0,0)--(0,-1);
\draw[very thick] (0,-1)--(2,-1);
\draw[very thick](2,0)--(2,-1);
\draw (1,1)--(1,-2);
\draw (2,1)--(2,-1);
\draw (1,1)--(2,1);
\draw (0,-2)--(1,-2);
\draw[dotted](0,2)--(0,0);
\draw[dotted](1,2)--(1,0);
\draw[dotted](2,2)--(2,1);
\draw[dotted](0,1)--(1,1);
\draw[dotted](0,-2)--(0,-3);
\draw[dotted](1,-2)--(1,-3);
\draw[dotted](2,-1)--(2,-3);
\draw[dotted](1,-2)--(2,-2);
\node at (1,-3.5)[scale=.7]{$w_5$};
\end{scope}

\begin{scope}[xshift=18cm]
\fill [color=pink] (0,2)--(0,-2)--(1,-2)--(1,0)--(2,0)--(2,2)--(0,2);
\draw (0,0)--(0,-2);
\draw[very thick] (0,0)--(2,0);
\draw[very thick] (0,0)--(0,-1);
\draw[very thick] (0,-1)--(2,-1);
\draw[very thick](2,0)--(2,-1);
\draw (1,1)--(1,-2);
\draw (2,1)--(2,-1);
\draw (1,1)--(2,1);
\draw (0,-2)--(1,-2);
\draw[dotted](0,2)--(0,0);
\draw[dotted](1,2)--(1,0);
\draw[dotted](2,2)--(2,1);
\draw[dotted](0,1)--(1,1);
\draw[dotted](0,-2)--(0,-3);
\draw[dotted](1,-2)--(1,-3);
\draw[dotted](2,-1)--(2,-3);
\draw[dotted](1,-2)--(2,-2);
\node at (1,-3.5)[scale=.7]{$w_6$};
\end{scope}

\begin{scope}[xshift=21cm]
\fill [color=pink] (0,2)--(0,-3)--(1,-3)--(1,0)--(2,0)--(2,2)--(0,2);
\draw (0,0)--(0,-2);
\draw[very thick] (0,0)--(2,0);
\draw[very thick] (0,0)--(0,-1);
\draw[very thick] (0,-1)--(2,-1);
\draw[very thick](2,0)--(2,-1);
\draw (1,1)--(1,-2);
\draw (2,1)--(2,-1);
\draw (1,1)--(2,1);
\draw (0,-2)--(1,-2);
\draw[dotted](0,2)--(0,0);
\draw[dotted](1,2)--(1,0);
\draw[dotted](2,2)--(2,1);
\draw[dotted](0,1)--(1,1);
\draw[dotted](0,-2)--(0,-3);
\draw[dotted](1,-2)--(1,-3);
\draw[dotted](2,-1)--(2,-3);
\draw[dotted](1,-2)--(2,-2);
\node at (1,-3.5)[scale=.7]{$w_7$};
\end{scope}

\begin{scope}[xshift=24cm]
\fill [color=pink] (0,2)--(0,-2)--(1,-2)--(1,-1)--(2,-1)--(2,2)--(0,2);
\draw (0,0)--(0,-2);
\draw[very thick] (0,0)--(2,0);
\draw[very thick] (0,0)--(0,-1);
\draw[very thick] (0,-1)--(2,-1);
\draw[very thick](2,0)--(2,-1);
\draw (1,1)--(1,-2);
\draw (2,1)--(2,-1);
\draw (1,1)--(2,1);
\draw (0,-2)--(1,-2);
\draw[dotted](0,2)--(0,0);
\draw[dotted](1,2)--(1,0);
\draw[dotted](2,2)--(2,1);
\draw[dotted](0,1)--(1,1);
\draw[dotted](0,-2)--(0,-3);
\draw[dotted](1,-2)--(1,-3);
\draw[dotted](2,-1)--(2,-3);
\draw[dotted](1,-2)--(2,-2);
\node at (1,-3.5)[scale=.7]{$w_8$};
\end{scope}

\end{tikzpicture}
\caption{ A shading of the boxes whose $i$-weight is 1 (for $ i=0,\dots, 8$) corresponding to  the filling in the right in Figure \ref{fig:fillskewshape}.}
\label{fig:weightskewshape}
\end{figure}
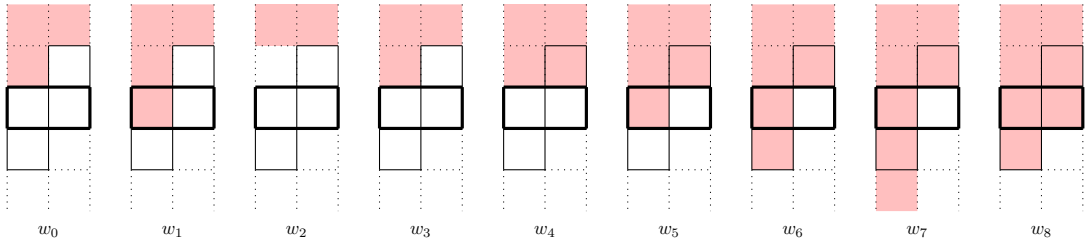

%TEOREMA DESCRIPCIO COMPONENTS
\bigskip
\begin{theorem} \label{teor:components} Let $X$ be a generic chain of $g$ elliptic curves. 
There is a one to one correspondence between the components of the Brill-Noether locus  of limit linear series of degree $d$ 
and dimension $r+1$ with assigned ramification at the points $P^1, Q^g$ and the admissible fillings of the associated skew shape.
\end{theorem}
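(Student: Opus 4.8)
The plan is to read the irreducible components off directly from the stratification of the scheme of refined limit linear series, controlling each factor by Lemma~\ref{exfibanul}, and then to re-encode the bookkeeping of the strata as admissible fillings of the skew shape.

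\emph{Step 1: strata and their dimensions.} By Definition~\ref{def:eh-struct} we have $G^{r}_{d}(X,P^{1},Q^{g},\alpha,\beta)=\bigcup\prod_{i=1}^{g}G^{r}_{d}(C^{i},P^{i},Q^{i},\alpha(i),\beta(i))$, the union being over all compatible systems $(\alpha(i),\beta(i))$. Since $X$ is a \emph{generic} chain, $P^{i}-Q^{i}$ is non-torsion on each $C^{i}$, so Lemma~\ref{exfibanul} governs every factor: it is nonempty exactly when $\alpha_{j}(i)+\beta_{j}(i)\le d-r$ for all $j$ with equality for at most one $j$; it is then irreducible, of dimension $(r+1)d-r-\sum_{j}(u_{j}(i)+v_{j}(i))$, and its general point has ramification exactly $\alpha(i),\beta(i)$ at $P^{i},Q^{i}$. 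Using the gluing relations $\alpha_{j}(i+1)+\beta_{j}(i)=d-r$ one checks that $\sum_{i,j}(\alpha_{j}(i)+\beta_{j}(i))=\sum_{j}\alpha_{j}+\sum_{j}\beta_{j}+(r+1)(g-1)(d-r)$, independently of the system, whence $\sum_{i}\bigl[(r+1)d-r-\sum_{j}(u_{j}(i)+v_{j}(i))\bigr]=\rho(g,d,r,\alpha,\beta)$ for \emph{every} nonempty product. Distinct systems give products whose general point carries distinct ramification at the nodes, so these products are pairwise distinct; being irreducible of the top dimension $\rho$, their closures are exactly the irreducible components of $G^{r}_{d}(X,P^{1},Q^{g},\alpha,\beta)$. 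The theorem thus reduces to a bijection between compatible systems all of whose factors are nonempty and admissible fillings of the skew shape.

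\emph{Step 2: the combinatorial dictionary.} Given a filling, let $\Lambda^{i}$, for $i=0,\dots,g$, be the set of boxes of $i$-weight $1$. Conditions (c)--(e) make each $\Lambda^{i}$ a Young-diagram-shaped region in the $r+1$ columns, with column depths $h_{0}(i)\ge\cdots\ge h_{r}(i)$; the initial weighting identifies $\Lambda^{0}$ with the region above the upper contour, and (f) identifies $\Lambda^{g}$ with the region above the lower contour; conditions (a)--(b) say precisely that $\Lambda^{i}$ is obtained from $\Lambda^{i-1}$ by deleting a sub-region and then adjoining at most one box, and that the filling is recovered from the sequence $(\Lambda^{i})_{i}$. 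Define
\[\alpha_{k}(i)=i-2-h_{k}(i-1),\qquad\beta_{k}(i)=d+1-r-i+h_{k}(i).\]
A direct computation gives $\alpha(1)=\alpha$ (from $h_{k}(0)=-\alpha_{k}-1$), $\beta(g)=\beta$, the gluing relations $\alpha_{k}(i+1)+\beta_{k}(i)=d-r$, and the staircase inequalities $h_{k}(\cdot)\ge h_{k+1}(\cdot)$ are exactly the conditions that $\alpha(i),\beta(i)$ be genuine ramification sequences; moreover $d-r-\alpha_{k}(i)-\beta_{k}(i)=1+h_{k}(i-1)-h_{k}(i)$. Hence ``$\alpha_{j}(i)+\beta_{j}(i)\le d-r$ for all $j$, with equality for at most one $j$'' --- non-emptiness of the $i$-th factor by Lemma~\ref{exfibanul} --- translates into ``column $k$ of $\Lambda^{i}$ exceeds column $k$ of $\Lambda^{i-1}$ by at most $1$, and by exactly $1$ for at most one $k$'', which is exactly the clause that $\Lambda^{i}$ adjoins at most one box to $\Lambda^{i-1}$. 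Thus the assignments above are mutually inverse bijections between admissible fillings and compatible systems with all factors nonempty, which finishes the proof.

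\emph{Where the difficulty lies.} The heart of the argument is Step 2: one must verify, condition by condition, that admissibility (a)--(f) matches the non-emptiness criterion of Lemma~\ref{exfibanul} all along the chain --- in particular that the purely combinatorial ``adjoin at most one box'' corresponds to the torsion obstruction ``$u_{j}+v_{j}=d$ for at most one $j$'' on each elliptic component --- and that a filling is uniquely recovered from its sequence of weight-one regions, so that distinct admissible fillings really do give distinct components. I would carry this out by induction on $g$, stripping off the last elliptic component $C^{g}$ (whose linear series is controlled by the transition $\Lambda^{g-1}\to\Lambda^{g}$ together with Lemma~\ref{exfibanul}) and appealing to the case of the chain $C^{1},\dots,C^{g-1}$ for the rest. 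The telescoping identity in Step 1, though elementary, is the input that promotes each nonempty stratum to a full irreducible component rather than a boundary piece of one.
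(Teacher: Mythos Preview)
Your proposal is correct and follows essentially the same route as the paper: the paper also reduces to the bijection between nonempty compatible systems $(u^i,v^i)$ and admissible fillings, encoding the latter by the function $L^{i}_{j}$ (the first row in column $j$ of $i$-weight $0$, i.e.\ your $h_{j}(i)+1$), and verifies exactly the same three facts---that the resulting vanishing sequences are strictly increasing, that Lemma~\ref{exfibanul}'s non-emptiness criterion on each $C^{i}$ is equivalent to ``at most one box is added in passing from $\Lambda^{i-1}$ to $\Lambda^{i}$'', and that the telescoping sum forces every nonempty stratum to have dimension $\rho$. Your Step~1 makes the ``every nonempty stratum is a component'' point slightly more explicit than the paper does, and your suggested induction on $g$ is a harmless variant of the paper's direct check.
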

\begin{remark}
 We are assuming that the filling can go in rows above and below the shape. But due to  the finiteness of $g$ 
 and the requirement that by the end of the process the rows above the lower contour have weight one, the``active'' rows, those that have numbers on them, 
 vary only within finite bounds.
 When we consider the range of variability of a row, we will implicitly assume that it moves within such bounds (any such bounds will do).

\end{remark}
\begin{proof} Fix an allowable filling of a skew shape.
 Define vanishing orders at the nodes inductively  as follows
\[ u^1=(u_0^1,u_1^1, \dots,u_r^1)=(\alpha_0, \alpha_1+1,\dots, \alpha_r+r), \]
\[ u_j ^{i+1} =u_j^i+1-\sum _{ b}w^i(j,b)+\sum _{ b}w^{i-1}(j,b) ,\  i=1,\dots, g-1, \ j=0,\dots, r\]
\[ v_j^i=d-u_j^{i+1}, \ \  i=1,\dots g-1, \ j=0,\dots, r\]
\[v^g=(v_0^g,v_1^g, \dots,v_r^g)=(\beta_0+r, \beta_1+r-1,\dots, \beta_r)\]

Note that the recursive definition of  $u_j^i$ gives
\[ u_j^i=u_j^1+(i-1)-\sum _{ b}w^{i-1}(j,b)+\sum _{ b}w^{0}(j,b)=\alpha_j+j+i-1-\sum _{ b}w^{i-1}(j,b)+\sum _{ b}w^{0}(j,b)=\]
\[ =j+i-1-[\sum _{ b}w^{i-1}(j,b)-\sum _{ b}w^{0}(j,b)-\alpha_j]= j+i-1-  L^{i-1}_j    \]
where $\mathbf{ L^{i-1}_j}$ denotes the lowest row order of a box in column $j$ for which the $(i-1)$-weight is zero, that is 
\begin{equation}\label{eq:uj(i)}    u_j^i=   j+i-1-L^{i-1}_j\text{ where }  w^{i-1}(j, L^{i-1}_j)=0, w^{i-1}(j, L^{i-1}_j-1)=1   \end{equation}

We assign to the filling of the skew shape the component of the set of limit linear series given as $\prod_{i=1}^g G^{r}_d (E_i,P_i,Q_i, u^i,v^i) $.
We need to show the following 
\begin{Claim}\label{nobuit}
\begin{enumerate}[(i)]
\item The sequences $u^i=(u_0^i,u_1^i, \dots,u_r^i)$ satisfy 
\[0\le u_0^i< u_1^i< \dots<u_r^i\le d \text{ (and therefore } 0\ge v_0^i>v_1^i>\dots >v_r^i\ge d) \]
\item Each $ G^{r}_d (E_i,P^i,Q^i, u^i,v^i) $ is non-empty.
\item \[ \sum_{i=1,\dots, g}\dim G^{r}_d (E^i,P^i,Q^i, u^i,v^i) =\rho (g,r,d,\alpha, \beta )\]
\end{enumerate}
\end{Claim}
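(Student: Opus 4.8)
The whole claim concerns the integers $L^i_j$ attached to the fixed admissible filling, so the first step is to pin those down. Conditions (c) and (e) guarantee that for each $0\le i\le g$ and each column $j$ there is a unique row index $L^i_j$ with $w^i(j,L^i_j)=0$ and $w^i(j,L^i_j-1)=1$, i.e. the boxes of $i$-weight $1$ in column $j$ form a top segment; condition (d) then gives $L^i_{j+1}\le L^i_j$; the initial weighting gives $L^0_j=-\alpha_j$; and condition (f) gives $L^g_j=g-d+r+\beta_j$. I will also use repeatedly the bookkeeping that the $i$-weight minus the $(i-1)$-weight of a box equals the signed multiplicity with which the number $i$ occurs on it, which by (b) lies in $\{-1,0,1\}$; hence a box jumps from $(i-1)$-weight $0$ to $i$-weight $1$ exactly when $i$ occurs on it with weight $+1$, and by (a) this happens on at most one box of the entire diagram. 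With \eqref{eq:uj(i)} written as $u^i_j=j+i-1-L^{i-1}_j$, together with the consistent convention $u^{g+1}_j:=j+g-L^g_j=d-r-\beta_j+j$ (so that $v^g_j=d-u^{g+1}_j$ matches the prescribed $v^g$), parts (i)--(iii) become statements about the $L^i_j$.

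For (i), strict monotonicity $u^i_j<u^i_{j+1}$ is immediate from $L^{i-1}_{j+1}\le L^{i-1}_j$. For the bounds I count $+1$-boxes: since each of $1,\dots,i-1$ provides at most one $0\to 1$ jump in a fixed column, $L^{i-1}_0\le L^0_0+(i-1)=-\alpha_0+(i-1)$, so $u^i_0\ge\alpha_0\ge 0$; running the same count from stage $g$ downwards in column $r$ gives $L^{i-1}_r\ge L^g_r-(g-i+1)=r+\beta_r+i-1-d$, so $u^i_r\le d-\beta_r\le d$, and monotonicity propagates both bounds to all $j$. The same count in column $0$ over all $g$ steps yields $L^g_0-L^0_0\le g$, i.e. $\alpha_0+\beta_0\le d-r$, which takes care of the prescribed endpoint $v^g$ (visibly strictly decreasing, with $v^g_r=\beta_r\ge 0$ and $v^g_0=\beta_0+r\le d$); and for $i<g$ the sequence $v^i=d-u^{i+1}$ is valid because $u^{i+1}$ is.

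For (ii), fix $i$; genericity of the chain makes $P^i-Q^i$ non-torsion, so by Lemma \ref{exfibanul} it is enough that $u^i_k+v^i_k\le d$ for every $k$, with equality for at most one $k$. Since $u^i_k+v^i_k=u^i_k+d-u^{i+1}_k=d-1+L^i_k-L^{i-1}_k$, this amounts to $L^i_k\le L^{i-1}_k+1$ always, with equality for at most one $k$. If $L^i_k\ge L^{i-1}_k+2$, then the boxes of column $k$ in rows $L^{i-1}_k$ and $L^{i-1}_k+1$ (both of $(i-1)$-weight $0$ by (e)) would both jump to $i$-weight $1$, forcing $i$ to occur with weight $+1$ on two boxes, contrary to (a); and $L^i_k=L^{i-1}_k+1$ forces exactly such an occurrence of $i$ in column $k$, so by (a) at most one $k$ does this. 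Hence every factor $G^r_d(E^i,P^i,Q^i,u^i,v^i)$ is non-empty.

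For (iii), Lemma \ref{exfibanul} now gives $\dim G^r_d(E^i,P^i,Q^i,u^i,v^i)=(r+1)d-r-\sum_j(u^i_j+v^i_j)$, and $\sum_j(u^i_j+v^i_j)=(r+1)d-\sum_j(u^{i+1}_j-u^i_j)$ for each $i=1,\dots,g$; summing over $i$, the right-hand side telescopes to $g(r+1)d-\sum_j(u^{g+1}_j-u^1_j)$, and $u^1_j=\alpha_j+j$, $u^{g+1}_j=d-r+j-\beta_j$ give $\sum_j(u^{g+1}_j-u^1_j)=(r+1)(d-r)-\sum_j\alpha_j-\sum_j\beta_j$. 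Plugging back, $\sum_{i=1}^g\dim=-gr+(r+1)(d-r)-\sum\alpha_j-\sum\beta_j$, which equals $g-(r+1)(g-d+r)-\sum\alpha_j-\sum\beta_j=\rho(g,r,d,\alpha,\beta)$ since $(r+1)(d-r)-gr=g-(r+1)(g-d+r)$. The only genuinely delicate point is the very first step: confirming that axioms (a)--(f) really do force the weight-one regions to be a nested family of staircases with the stated boundary values $L^0_j=-\alpha_j$ and $L^g_j=g-d+r+\beta_j$. Granting that, (i) is a count of $+1$-boxes, (ii) is one application of Lemma \ref{exfibanul}, and (iii) is a telescoping sum.
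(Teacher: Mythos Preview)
Your argument is correct and follows essentially the same route as the paper: both reduce everything to the formula $u^i_j=j+i-1-L^{i-1}_j$, derive the strict monotonicity in (i) from condition (d), deduce (ii) from Lemma~\ref{exfibanul} via $L^i_j\le L^{i-1}_j+1$ with equality at most once (condition (a)), and obtain (iii) by a telescoping computation. Your version is in fact a bit more thorough: the paper only verifies the strict inequalities in (i) and does not explicitly check the bounds $0\le u^i_0$ and $u^i_r\le d$, which you supply by counting $+1$-boxes; and your convention $u^{g+1}_j:=j+g-L^g_j$ lets you treat $i=g$ uniformly, whereas the paper handles that case separately.
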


We first prove (i). From equation (\ref{eq:uj(i)}), this is equivalent to    
\[ j+i-1-L^{i-1}_j  < j+1+i-1-L^{i-1}_{j +1} .\]
which translates to $L^{i-1}_{j +1} \le L^{i-1}_j  $. Then (d) in  Definition \ref{def:admfil} gives the result.

Using Lemma \ref{exfibanul},  condition (ii) above is equivalent to  $u_j^i+v_j^i\le d$ for all $i, j$ and moreover for any given $i$, equality holds for at most one $j$.
From the definition of $v_j^i, i\not= g$, and equation (\ref{eq:uj(i)})
\[ u_j^i+v_j^i=u_j^i+d-u_j^{i+1}= j+i-L^{i-1}_j  +d-[j+i-L^{i}_j  ] =\]
\[= d-1+L^{i}_j   -L^{i-1}_j \]
From Definition  \ref{def:admfil}, (a), $i$ appears at most once with weight one in the filling. 
Hence, $L^{i}_j  \le L^{i-1}_j  +1$ with equality (once $i$ is fixed) for at most one $j$.

For $i=g$, using the definition of $v_j^g$ and equation (\ref{eq:uj(i)}), 
\[ u_j^g+v_j^g=j+g-1-L^{g-1}_j  +\beta_j+r-j=g-1+r+\beta_j-L^{g-1}_j\]
From definition \ref{def:admfil} (d), $L^g_j=g-d+r+\beta_j$ and therefore $d=g+r+\beta_j-L^g_j$. Therefore, condition (ii) for $i=g$ is equivalent to 
\[ g-1+r+\beta_j-L^{g-1}_j\le g+r+\beta_j-L^g_j\]
with equality ocurring for at most one value of $j$. This is equivalent to $L^g_j\le L^{g-1}_j+1$ with equality for at most one value of $j$.
This follows again from  Definition  \ref{def:admfil} (a).

By lemma \ref{exfibanul}  , the sum of dimensions appearing in condition (iii) can be computed as  
\[ \sum_{i=1,\dots, g}[(r+1)d-r-\sum_{j=0,\dots, r} (u_j^i+v_j^i)] \]
Using the definitions of the $u_j^i, v_j^i$, the expression becomes
\[ g(r+1)d-gr- \sum_{j=0,\dots, r}   [\sum_{i=2,\dots, g}(u_j^i+  \sum_{i=1,\dots, g-1}(d-u_j^{i+1})+[\alpha _j+j+\beta_j+(r-j)]]=   \] 
\[ =g(r+1)d-gr-(g-1)(r+1)d -r(r+1)-\sum_{j=0,\dots, r} [\alpha _j+\beta_j]= \rho (g,r,d,\alpha, \beta)   \] 
as needed.)

\medskip

Let us now prove the converse. Given a component of the space of limit linear series on a generic chain of elliptic curves, it can be expressed as 
$\prod_{i=1}^g G^{r}_d (E^i,P^i,Q^i, u^i,v^i )$ where $v_j^{i+1}=d-u_j^i, u_j^1=\alpha_j+j,  v_j^g=\beta _j+r-j$ and 
\[ \sum_{i=1,\dots, g}\dim G^{r}_d (E^i,P^i,Q^i, u^i,v^i) =\rho (g,r,d,\alpha, \beta )\]
We will assign to this component a filling of the skew shape associated to $g, r, d, \alpha, \beta$ by successively adding to the box the indices 1, then 2, then 3 and so on till $g$
with positive or negative weights as follows:
Assume that the indices up to $i-1$ have already been placed. In particular, the concept of $\bar i$-weight makes sense for $\bar i\le i-1$.
Assume that the filling with the first $i-1$ indices satisfies conditions (a)-(e) in Definition \ref{def:admfil} for $\bar i\le i-1$.
In particular, from conditions (c) and (e), for each column $j$ and for every $\bar i \le i-1$,  there is a first row order  $L^{\bar i}_j$ where the  box in column $j$
 has  $\bar i$-weight zero ($  w_{\bar i}(j, L^{\bar i}_j)=0, w_{\bar i}(j, L^{\bar i}_j-1)=1$).

Write $d-u_j^i-v_j^i=k_j^i$.  From Lemma \ref{exfibanul} , $k_j^i\ge 0$. If $k_j^i=0$, insert the index $i$ with positive weight one  in column $j$ and row $L^{ i-1}_j$. 
  If $k_j^i=1$, the index $i$ will not appear in column $j$. 
  If $k_j^i>1$, insert the index $i$ with negative weight   in column $j$ and row $L^{ i-1}_j-1,  L^{ i-1}_j-2, \dots , L^{ i-1}_j-(k_j^i-1)$. 
  Note that this implies that 
  \begin{equation}\label{eq:L(i,j)} L^{ i}_j=L^{ i-1}_j+1-(d-u_j^i-v_j^i)\end{equation}
From Lemma \ref{exfibanul},  conditions (a) in Definition \ref{def:admfil} is satisfied for $\bar i\le i$ and by construction (b), (c) and (e) also hold.
 We need to check that condition (d) is also satisfied.

We start by showing that with our definitions,  
\begin{equation}\label{eq:u}u_j^{\bar i}=   j+\bar i-1-L^{ i-1}_j, \bar i\le i \end{equation}
This is true for $i=1$ as by assumption $u_j^1=\alpha_j+j$ while $L^{ 0}_j=-\alpha _j$.

Let us assume then that it is true up to $i$ and we show it is true for $i+1$.
By assumption, $u_j^{i+1}=d-v_j^i$. From equation (\ref{eq:L(i,j)}),   $d-v_j^i=L^{ i-1}_j-L^{ i}_j,+1+u_j^i$.
Then (\ref{eq:u}) for $i+1$ follows from the corresponding result for $i$.

Now condition $u_j^i<u_{j+1}^i$ can be written as 
\[ j+ i-1-L^{ i-1}_j<  j+1+ i-1-L^{ i-1}_{j +1}\]
which is equivalent to 
\[ L^{ i-1}_{j +1}\le L^{ i-1}_{j }\]
This is condition (d) in Definition \ref{def:admfil}.

It only remains to show that the $g$-weight of a box is 1 for the boxes lying above the lower contour of the skew shape and 0 for those lying below that lower contour.
With our notations, this condition is written as 
\[ L^g_j=g-d+r+\beta_j, \ j=0,\dots, r\]
We can compute the value of $L^g_j$ using repeatedly equation (\ref{eq:L(i,j)}):
\[ L^g_j= L^{g-1}_j+1+u_j^g+v_j^g-d=\dots=L^1_j+g-1+\sum_{i=1,\dots,g}[u_j^i+v_j^i-d]=\]
\[= L^1_j+g-1+u_j^1+\sum_{i=2,\dots,g}u_j^i+\sum_{i=1,\dots,g-1}(d-u_j^{i+1})+v_j^g-d= L(1,j)+g-1+u_j^1+v_j^g-d\]
From our definition of filling of a skew shape, $L^1_j=-\alpha_j$. By assumption, $u_j^1=\alpha_j+j, v_j^g=d-\beta_j+r-j$. 
Making these substitutions in the equation above, we obtain 
\[ L^g_j=g-d+r+\beta_j\]
as needed. This concludes the proof of the Theorem.
\end{proof}

%%%%%%%%%%%%%%%%%%%%%%%%%%%%%%%%%%%%%%%%%%%%%%%%%%%%%%%%%%%%%%%%%%%%%%%%%%%%%%%%%%%%%%%%%%%%%%%%%
%INTERSECTIONS
\section{Intersections}

We describe the intersection of components of the spaces of limit linear series in terms of the corresponding  fillings (see Definition \ref{def:admfil} and Theorem \ref{teor:components}) of a fixed skew shape.

Geometrically, the condition for a non-empty intersection is best described in terms of the  shading (such as the one in Figure \ref{fig:weightskewshape})
of the admissible fillings of the components we are intersecting. 
If the intersection is non-empty,  at each stage $i$ the shading of each skew shape  has at most one more box that the shading from all skew shapes at stage $i-1$.
 Moreover, the codimension of the intersection is the total discrepancy in the number of boxes that appear  in the shading of the different fillings.
 
 Note that the condition that every index appears at most once with positive weight one in a given admissible filling
  guarantees that the component corresponding to a skew shape intersects itself (and the co-dimension of the intersection is zero). 
  
  For a filling $A(k)$ of a skew shape, we will denote   by $L^i_j(k)$ the highest row order of a box in column $j$ for which the $(i)$-weight is one for  the filling  $A(k)$.

\begin{theorem}\label{intersection} 
The intersection of components $A(1),\dots , A(t)$ corresponding to  different fillings of a skew shape is the product over the elliptic curves $C^i$ of the loci of linear series of degree
$d$ with vanishing $\bar u_j^i$ at $P^i$  and vanishing $\bar v_j^i$ at $Q^i$  given by 
\[ \bar u_j^i=j+i-1-\min_{k\in\{1,\dots, t\} }L_j^{i-1}( k),  \ \   \bar v_j^i=d-j-i+\max_{k\in\{1,\dots, t\} }L_j^{i}( k),\]
In particular, the intersection is non-empty if and only if 
\[   \max_{k\in\{1,\dots, t\}} L_j^{i}( k)\le \min_{k\in\{1,\dots, t\} }L_j^{i-1}( k)+1\] %\label{condintfinal} 
and  for each $i$  equality occurs for at most one $j$.
  The codimension of the intersection is given as 
  \[ \sum _{i=1}^{g-1}\sum_{j=0}^r[\max_{k\in\{1,\dots, t\} }L_j^{i}( k)- \min_{k\in\{1,\dots, t\} }L_j^{i}( k), \]
\end{theorem}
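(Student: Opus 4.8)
The plan is to reduce the statement to the already-proved description of individual components (Theorem \ref{teor:components}) together with the elementary dimension/nonemptiness criterion for linear series on a single elliptic curve (Lemma \ref{exfibanul}). First I would recall that, by Theorem \ref{teor:components}, the component $A(k)$ equals the product $\prod_{i=1}^g G^r_d(C^i,P^i,Q^i,u^i(k),v^i(k))$ where, by equation \eqref{eq:uj(i)}, $u_j^i(k)=j+i-1-L_j^{i-1}(k)$ and $v_j^i(k)=d-u_j^{i+1}(k)=d-j-i+L_j^i(k)$ (with the boundary conventions $L_j^0=-\alpha_j$, $L_j^g=g-d+r+\beta_j$ forced on every admissible filling). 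Since the whole moduli space of refined limit linear series on $X$ is a union over vanishing sequences at the nodes of products $\prod_i G^r_d(C^i,P^i,Q^i,u^i,v^i)$, the scheme-theoretic intersection $\bigcap_{k=1}^t A(k)$ is computed componentwise: on $C^i$ it is the locus of linear series whose vanishing at $P^i$ is at least $\max_k u_j^i(k)$ for each $j$ and whose vanishing at $Q^i$ is at least $\max_k v_j^i(k)$. This is exactly the locus with vanishing $\bar u_j^i=\max_k u_j^i(k)=j+i-1-\min_k L_j^{i-1}(k)$ at $P^i$ and $\bar v_j^i=\max_k v_j^i(k)=d-j-i+\max_k L_j^i(k)$ at $Q^i$, which is the asserted formula.

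Next I would apply Lemma \ref{exfibanul} on each elliptic curve $C^i$ with the sequences $\bar u^i, \bar v^i$: the locus $G^r_d(C^i,P^i,Q^i,\bar u^i,\bar v^i)$ is nonempty if and only if $\bar u_j^i+\bar v_j^i\le d$ for all $j$ with equality for at most one $j$. Substituting the formulas gives $\bar u_j^i+\bar v_j^i=d-1+\max_k L_j^i(k)-\min_k L_j^{i-1}(k)$, so the inequality is $\max_k L_j^i(k)\le \min_k L_j^{i-1}(k)+1$ and the "at most one $j$" clause is the stated equality condition — so the intersection is nonempty (componentwise, hence globally) exactly under the displayed criterion. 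One still has to check $\bar u^i$ is a strictly increasing sequence in $j$ and $0\le \bar u_0^i$, $\bar u_r^i\le d$ for the relevant range: monotonicity follows because each $L_j^{i-1}(k)$ is weakly decreasing in $j$ (Definition \ref{def:admfil}(d)), hence so is $\min_k L_j^{i-1}(k)$, giving $\bar u_j^i<\bar u_{j+1}^i$; the boundary bounds follow from the boundary conventions on $L^0$ and $L^g$ exactly as in the proof of Claim \ref{nobuit}(i)–(ii).

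Finally, for the codimension: the dimension of each factor is $(r+1)d-r-\sum_j(\bar u_j^i+\bar v_j^i)$ by Lemma \ref{exfibanul}, and the dimension of the ambient component, say $A(1)$, is correspondingly $(r+1)d-r-\sum_j(u_j^i(1)+v_j^i(1))$ on the $i$-th factor, summing to $\rho(g,r,d,\alpha,\beta)$ by Claim \ref{nobuit}(iii). Actually it is cleaner to compare against any single $A(k_0)$ and then symmetrize: the codimension of the intersection inside $A(k_0)$ is $\sum_i\sum_j\big[(\bar u_j^i+\bar v_j^i)-(u_j^i(k_0)+v_j^i(k_0))\big]$. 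Using the substitutions and telescoping the $u_j^{i+1}$ terms (the boundary terms $u_j^1$ and $v_j^g$ are the same for every filling, so they cancel), this collapses to $\sum_{i=1}^{g-1}\sum_{j=0}^r\big[(\max_k L_j^i(k)-L_j^i(k_0))+(L_j^i(k_0)-\min_k L_j^{i-1}(k))-(L_j^{i-1}(k_0)-L_j^{i-1}(k_0)\ \text{shift})\big]$, and a short manipulation — shifting the index $i\mapsto i-1$ in the $\min$-term and again using that $L^0$ and $L^g$ agree across fillings — yields $\sum_{i=1}^{g-1}\sum_{j=0}^r\big[\max_k L_j^i(k)-\min_k L_j^i(k)\big]$, independent of $k_0$, which is the claimed codimension. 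I expect the bookkeeping in this last telescoping step — making the boundary cancellations precise and confirming that the sum ranges over $i=1,\dots,g-1$ rather than $1,\dots,g$ — to be the only real obstacle; everything else is a direct transcription of Theorem \ref{teor:components}, Lemma \ref{exfibanul}, and Definition \ref{def:admfil}(d).
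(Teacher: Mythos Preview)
Your proposal is correct and follows essentially the same route as the paper: both reduce to a curve-by-curve intersection, take the maximum of the vanishing sequences to obtain $\bar u_j^i,\bar v_j^i$, invoke Lemma~\ref{exfibanul} for nonemptiness and dimension, and then telescope using the fact that $L_j^0$ and $L_j^g$ are the same for every filling. The only cosmetic difference is that the paper computes the dimension of the intersection directly and compares it to $\rho=g-\sum_j(L_j^g-L_j^0)$, whereas you compare against a chosen component $A(k_0)$; after the same boundary cancellations both yield $\sum_{i=1}^{g-1}\sum_j[\max_k L_j^i(k)-\min_k L_j^i(k)]$.
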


\begin{proof} The space of limit linear series is a product over the elliptic curves of a space of linear series on each curve with given vanishing at the points $P^i, Q^i$.
The intersection of  components corresponding to  different fillings of a skew shape can then be obtained as the product over the elliptic curves
 of the intersections of the spaces of linear series.
The condition for  components to intersect can be formulated curve-wise:
on each elliptic component there is an $r+1$-dimensional space of sections satisfying simultaneously the vanishing conditions for each of the fillings. 
 If the vanishing at $P^i, Q^i$ imposed by the $k^{th}$ skewshape  is given by $(u^i_0(k),\dots , u^i_r(k)),  (v^i_0(k),\dots , v^i_r(k)) $,
  then the intersection on the component $C^i$ is the space of linear series with vanishing 
 \[  (\max_{k\in\{1,\dots, t\} } \{u^i_0(k)\},\dots ,\max_{k\in\{1,\dots, t\} } \{u^i_r(k) \}),  (\max_{k\in\{1,\dots, t\} } \{v^i_0(k)\},\dots , \max_{k\in\{1,\dots, t\} } \{v^i_r(k) \})\]

 Recall that, each filling of a skew shape  determines, the minimum orders of vanishing of sections at $P^i, Q^i$  on the elliptic component $C^i$.
%Denote   by $L_k(i-1,j)$ the highest row order of a box in column $j$ for which the $(i-1)$-weight is one in the $k^{th}$ filling.  
From equation \ref{eq:uj(i)}, we can write
$u^i_j(k) = j+i-1-L_j^{i-1}(k)$ while $v^i_j(k)=d-u^{i+1}_j(k).$
Therefore, 
\[  \bar u^i_j(k)=  \max_{k\in\{1,\dots, t\} } \{u^i_j(k)\} =j+i-1-\min_{k\in\{1,\dots, t\} } \{L_j^{i-1}(k)\}, \]
\[ \bar v^i_j(k) =\max_{k\in\{1,\dots, t\} } \{v^i_j(k)\} =d-j-i+\max_{k\in\{1,\dots, t\} } \{L_j^i(k)\}, \]
Using Lemma \ref{exfibanul},  the condition for the intersection to be non-empty is equivalent to  
\[   \bar u_j^i+\bar v_j^i\le d,   \text{ for all } i, j\]
 where for a given $i$, at most one of the inequalities is an equality.
 Combining this equation with the description of $ \bar u^i_j(k),  \bar v^i_j(k)$ in the previous equations, this gives the non-emptiness condition in the statement of the Theorem.
 
From Lemma \ref{exfibanul} , the dimension of the intersection is $(r+1)d-r-\sum_{j=0,\dots, r} (\bar u_j^i+\bar v_j^i)$.
This can be translated as 
  \[ \sum _{i=1}^g [ 1+\sum_j\min_{k\in\{1,\dots, t\} }L_j^{i-1}(k)\-\max_{k\in\{1,\dots, t\} }L_j^{i}(k)\  =  \]  
  \[ =g-\sum_{j=0}^r[\max_{k\in\{1,\dots, t\} }L_j^{g}(k) -\min_{k\in\{1,\dots, t\} }L_j^{0}(k)]   -\sum _{i=1}^{g-1}\sum_{j=0}^r[\max_{k\in\{1,\dots, t\} }L_j^{i}(k)- \min_{k\in\{1,\dots, t\} }L_j^{i}(k)] \]
Note now that the dimension $\rho$ of every component is $g$ minus the number of boxes in the shape which can be written as $g-(\sum_jL^g_j-\sum_jL^0_j)$ 
where the $L_j^{i}(k)$ can refer to any filling of the skew shape as they all start empty and end full after the $g^{th}$ stage.
This gives the stated result for the codimension of the intersection.
\end{proof}

%%%%%%%%%%%%%%%%%%%%%%%%%%%%%%%%%%%%%%%%%%%%%%%%%%%%%%%%%%%%%%%%%%%%%%%%%%%%%%%%%%%%%
%THE BIBLIOGRAPHY

\end{document}